\numberwithin{equation}{section}
\newcommand{\Sec}{\mathrm{Sec}}
\newcommand{\diam}{\mathrm{diam}}
\newcommand{\Ric}{\mathrm{Ric}}
\newcommand{\Span}{\mathrm{Span}}
\title{\Large \bf \boldmath\ \\ Geometric analysis aspects of infinite semiplanar graphs with nonnegative curvature II} 
\author{\large  Bobo Hua$^\ast$\ \ \ \  J\"{u}rgen Jost$^{\dag}$ \ \ \ \ } 
\date{}
\begin{document}

\maketitle

\renewcommand{\thefootnote}{\fnsymbol{footnote}}

\footnotetext{\hspace*{-5mm} \begin{tabular}{@{}r@{}p{13.4cm}@{}}
$^\ast$ $^{\dag}$ & Max Planck Institute for Mathematics in the Sciences, Leipzig, 04103, Germany.\\
$^\ast$ $^{\dag}$ & The research leading to these results has
received funding from the European Research Council under the
European Union's Seventh Framework Programme (FP7/2007-2013) / ERC
grant agreement
n$^\circ$~267087.\\
&{Email: bobohua@mis.mpg.de, jost@mis.mpg.de}\\
&The Mathematics Subject Classification 2010: 31C05, 05C81, 05C63.
\end{tabular}}

\renewcommand{\thefootnote}{\arabic{footnote}}

\newtheorem{theorem}{Theorem}[section]
\newtheorem{con}[theorem]{Conjecture}
\newtheorem{lemma}[theorem]{Lemma}
\newtheorem{corollary}[theorem]{Corollary}
\newtheorem{definition}[theorem]{Definition}
\newtheorem{conv}[theorem]{Assumption}
\newtheorem{remark}[theorem]{Remark}

\bigskip

\begin{abstract}  In a previous paper \cite{HJL}, we have applied
  Alexandrov geometry methods to study infinite semiplanar graphs with
  nonnegative combinatorial curvature. We \cite{HJL} proved the weak relative
  volume comparison and the Poincar\'e inequality on these graphs to
  obtain a dimension estimate for polynomial growth harmonic functions
  which is asymptotically quadratic in the growth rate. In the present
  paper, instead of using volume comparison on the graph, we translate
  the problem to a polygonal surface by filling polygons into the
  graph with edge lengths 1. This polygonal surface then is an
  Alexandrov space of nonnegative curvature. On this  Alexandrov
  space, we then obtain the optimal  dimension estimate
  for polynomial growth harmonic functions on the graphs. The estimate
  is
  linear in the growth rate. From a harmonic function on the graph, we
  construct a function on the corresponding Alexandrov surface that is
  not necessarily harmonic, but satisfies crucial estimates. Finally,
  a combinatorial argument  controls the maximal facial degree of
  the polygonal surface.
\medskip
\end{abstract}
\section{Introduction}
This paper is the second one in a series  studying geometric analysis aspects of infinite graphs with nonnegative curvature. We refine the argument in Hua-Jost-Liu \cite{HJL} and introduce a new observation to obtain the asymptotically optimal dimension estimate of the space of polynomial growth harmonic functions on such graphs.

In 1975, Yau \cite{Y1} proved the Liouville theorem for harmonic
functions on Riemannian manifolds with nonnegative Ricci
curvature. Soon after Cheng-Yau \cite{CheY} obtained the gradient
estimate for positive harmonic functions which implies that sublinear
growth harmonic functions on these manifolds are constant. Then Yau
\cite{Y2,Y3} conjectured that the space of polynomial growth harmonic
functions with growth rate less than or equal to $d$ on Riemannian
manifolds with nonnegative Ricci curvature is of finite
dimension. Li-Tam \cite{LT} and Donnelly-Fefferman \cite{DF}
independently solved the conjecture for 2-dimensional manifolds. Then
Colding-Minicozzi \cite{CM1,CM2,CM3} gave the affirmative answer for
any dimension by using the volume comparison property and the Poincar\'e
inequality. Later, Li \cite{L1} and Colding-Minicozzi \cite{CM4} simplified the proof by the mean value inequality. The dimension estimates in \cite{CM3,CM4,L1} are
asymptotically optimal. In the wake of this result, many
generalizations on manifolds \cite{W,T,STW,LW3,LW1,LW2,CW,KLe,Le} and on
singular spaces \cite{D1,Kl,Hu1,Hu2} followed. In this paper, we obtain the optimal dimension estimate which is linear in $d$ rather than quadratic in $d$ as in \cite{HJL}.

Let us now describe the results in more details. The combinatorial curvature
for planar graphs was introduced by \cite{St,G,I} and studied by many authors
\cite{H,Woe,DM,CC,Ch,SY,RBK,BP1,BP2,K1,K2,K3}. Let $G=(V,E,F)$ be a (called
semiplanar) graph embedded in a 2-manifold such that each face is
homeomorphic to a closed disk with finite edges as the boundary. Let
$S(G)$ be the regular polygonal surface obtained by assigning
length one to every edge and filling regular polygons in the faces of $G.$ The combinatorial curvature at the vertex $x$ is defined as
$$\Phi(x)=1-\frac{d_x}{2}+\sum_{\sigma\ni x}\frac{1}{\deg(\sigma)},$$
where $d_x$ is the degree of the vertex $x$, $\deg(\sigma)$ is the
degree of the face $\sigma$, and the sum is taken over all faces
incident to $x$ (i.e. ${x\in \sigma}$). The idea of this definition
is to measure the
difference of $2\pi$ and the total angle $\Sigma_x$ at the vertex $x$
on the regular polygonal surface $S(G)$ equipped with a metric structure obtained from replacing each face of $G$ with a regular polygon of side length one and gluing them along the common edges. That is,
$$2\pi \Phi(x)=2\pi-\Sigma_x.$$
It was proved in \cite{HJL} that $G$ has nonnegative combinatorial
curvature everywhere if and only if the corresponding regular polygonal surface
$S(G)$ is an Alexandrov space with nonnegative sectional curvature,
i.e. $\Sec S(G)\geq 0$ (or $\Sec G\geq0$ for short). This class of graphs includes all regular tilings of the plane (see \cite{GS}) and more general graphs (see \cite{Ch,HJL}).

For the basic facts of Alexandrov spaces, readers are referred to \cite{BGP,BBI}. In this paper, we only consider 2-dimensional Alexandrov spaces with nonnegative curvature (in particular convex surfaces). Let $G$ be a semiplanar graph with $\Sec G\geq0,$ then $X:=S(G)$ is a 2-dimensional Alexandrov space with nonnegative curvature. We denote by $d$ the intrinsic metric on $X,$ by $B_R(p):=\{x\in X\mid d(x,p)\leq R\}$ the closed geodesic ball on $X,$ and by $|B_R(p)|:=H^2(B_R(p))$ the volume of $B_R(p),$ i.e. 2-dimensional Hausdorff measure of $B_R(p),$ for some $p\in X, R>0.$ The well known Bishop-Gromov volume comparison holds on $X$ (see \cite{BBI}) that for any $p\in X, 0<r<R$, we have
\begin{equation}\label{RVC1}\frac{|B_R(p)|}{|B_r(p)|}\leq\left(\frac{R}{r}\right)^2,\end{equation}
 \begin{equation}\label{VD1}|B_{2R}(p)|\leq4 |B_R(p)|,\end{equation}
We call (\ref{RVC1}) the relative volume comparison and (\ref{VD1}) the volume doubling property. The Poincar\'e inequality was proved in \cite{KMS, Hu1} on Alexandrov spaces. For any $p\in X, R>0$ and any Lipschitz function $u$ on $X,$ \begin{equation}\label{PI1}\int_{B_R(p)}|u-u_{B_R}|^2\leq CR^2\int_{B_R(p)}|\triangledown u|^2,\end{equation} where $u_{B_R}=\frac{1}{|B_R(p)|}\int_{B_R(p)}u,$ and $|\triangledown u|$ is the a.e. defined norm of the gradient of $u.$

 It has been shown in \cite{HJL} that $G$ inherits some geometric  estimates  from those of $X:=S(G).$ For any $p\in G$ and $R>0,$ we denote by $d^G$ the distance on the graph $G$, by $B_R^G(p)=\{x\in G: d^{G}(p,x)\leq R\}$ the closed geodesic ball on $G$ and by $|B_R^G(p)|:=\sum_{x\in B_R^G(p)}d_x$ the volume of $B_R^G(p).$ Let $D:=D_G$ denote the maximal degree of the faces in $G$, i.e. $D:=D_G:=\sup_{\sigma\in F}\deg(\sigma)$ which is finite by \cite{CC}. Then the weak relative volume comparison (\ref{RVCG1}) and the volume doubling property (\ref{VDG1}) were obtained in \cite{HJL} for $\Sec G\geq 0$. For any $p\in G, 0<r<R,$
 \begin{equation}\label{RVCG1}\frac{|B_R^G(p)|}{|B_r^G(p)|}\leq C(D)\left(\frac{R}{r}\right)^2,\end{equation}
 \begin{equation}\label{VDG1}|B_{2R}^G(p)|\leq C(D) |B_R^G(p)|,\end{equation}
where $C(D)$ are constants only depending on $D.$ The Poincar\'e inequality on $G$ was also obtained in \cite{HJL}.
There exist two constants $C(D)$ and $C$ such that for any $p\in G, R>0, f:B_{CR}^G(p)\rightarrow \mathds{R}$, we have
\begin{equation}\label{PIG1}\sum_{x\in B_R^G(p)}(f(x)-f_{B_R})^2d_x\leq C(D)R^2\sum_{x,y\in B_{CR}^G(p);x\sim y}(f(x)-f(y))^2,\end{equation}
where $f_{B_R}=\frac{1}{|B_R^G(p)|}\sum_{x\in B_R^G(p)}f(x)d_x$, and $x\sim y$ means that $x$ and $y$ are neighbors in $G$.

A function $f$ on $G$ is called discrete harmonic (see \cite{G3,DK,Chu}) if for $\forall x\in G,$ $$Lf(x):=\frac{1}{d_x}\sum_{y\sim x}(f(y)-f(x))=0.$$
Let $G$ be a semiplanar graph with nonnegative curvature and
$H^d(G):=\{u:G\rightarrow\mathds{R}\mid Lu\equiv0, |u(x)|\leq
C(d^G(p,x)+1)^d\}$ which is the space of polynomial growth harmonic
functions with growth rate less than or equal to $d$ on $G.$ For a Riemannian manifold $M$, let $H^d(M):=\{u:M\rightarrow\mathds{R}\mid\Delta_Mu=0,|u(x)|\leq C(d(p,x)+1)^d\}.$ In Riemannian case, Colding-Minicozzi \cite{CM2} used the volume doubling property (\ref{VD1}) and the
Poincar\'e inequality (\ref{PI1}) to conclude the finite dimensionality of the space of polynomial growth harmonic functions $H^d(M)$ and get a rough dimension estimate. Then Colding-Minicozzi \cite{CM3} used the relative volume comparison and the Poincar\'e inequality to obtain the asymptotically optimal dimension estimate, i.e. $\dim H^d(M)\leq C(n)d^{n-1},$ for $d\geq1,$ $\Ric M^n\geq0$. Li \cite{L1} and Colding-Minicozzi \cite{CM4} obtained the optimal dimension estimate by the mean value inequality. In the graph case, the volume doubling property (\ref{VDG1}) and the Poincar\'e inequality (\ref{PIG1}) imply that $\dim H^d(G)\leq C(D)d^{v(D)}$ where $C(D)$ and $v(D)$ depending on the maximal facial degree $D$ (see \cite{D1}). Hua-Jost-Liu \cite{HJL} used the weak relative volume comparison (\ref{RVCG1}) to obtain the estimate $\dim H^d(G)\leq C(D)d^2.$ It is obviously not optimal. But it is hard to obtain the optimal dimension estimate on the graph $G$ since the constant!
  $C(D)$ in the weak relative volume comparison (\ref{RVCG1}) may not be close to $1$.

Here comes the key observation. Since the relative volume comparison
(\ref{RVC1}) on $X$ is as nice as in the case of Riemannian manifolds,
we do the dimension estimate argument for $H^d(G)$ on $X.$ For any
(discrete) harmonic function $f$ on $G,$ we extend it to a function
$\bar{f}$ defined on $X$ with controlled behavior (see
(\ref{DFF1})(\ref{DFF2})). But in general, the extended function
$\bar{f}$ may not be harmonic on $X$ anymore, nor will  $\bar{f}^2$ be subharmonic. However, since the original harmonic function $f$ satisfies the mean value inequality on $G$ (see Lemma \ref{MVIOG}), the extended function $\bar{f}$ satisfies the mean value inequality in the large.
\begin{theorem}[Mean value inequality on $X$]\label{MVIIL} Let $G$ be a semiplanar graph with $\Sec G\geq0.$ Then there exist constants $R_1(D),$ $C_2(D)$ such that for any $p\in X, R\geq R_1(D)$ and any harmonic function $f$ on $G$ we have
\begin{equation}\label{MVIOX}\bar{f}^2(p)\leq\frac{C_2}{|B_R(p)|}\int_{B_R(p)}\bar{f}^2.\end{equation}
\end{theorem}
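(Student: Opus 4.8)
The plan is to prove the mean value inequality for $\bar f$ on $X$ entirely by \emph{transferring} the corresponding inequality on the graph $G$ (Lemma \ref{MVIOG}), rather than by working with $\bar f^2$ directly on $X$. This detour is forced because, as already observed, $\bar f^2$ need not be subharmonic on $X$, so the Alexandrov--space mean value inequality cannot be applied to $\bar f^2$ itself. The two ingredients that make the transfer possible are the extension estimates (\ref{DFF1}) and (\ref{DFF2}), which compare the pointwise value and the $L^2$-mass of $\bar f$ to the vertex data of $f$, together with the quasi-isometry between $(G,d^G)$ and $(X,d)$ and the comparability of their volumes established in \cite{HJL}. Throughout, all auxiliary constants will depend only on the maximal facial degree $D$, since every face has at most $D$ vertices and diameter bounded in terms of $D$.

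First I would fix $p\in X$ and choose a vertex $x_0$ of $G$ with $d(p,x_0)\le c_0(D)$; such a vertex exists because $p$ lies in the closure of a face whose diameter is controlled by $D$. Using the pointwise extension estimate (\ref{DFF1}), the value $\bar f^2(p)$ is dominated by the values $f^2(x_i)$ at the (at most $D$) vertices $x_i$ incident to the face(s) containing $p$. I would then apply the graph mean value inequality (Lemma \ref{MVIOG}) at each such vertex: for $r$ above the threshold of that lemma,
\[
f^2(x_i)\le \frac{C(D)}{|B_r^G(x_i)|}\sum_{y\in B_r^G(x_i)} f^2(y)\,d_y .
\]
Since the $x_i$ all lie within distance $c_0(D)$ of one another, the balls $B_r^G(x_i)$ are mutually comparable in volume and all contained in $B_{r'}^G(x_0)$ with $r'=r+c_0(D)$, by the graph volume doubling (\ref{VDG1}); summing the at most $D$ resulting inequalities consolidates them into a single average,
\[
\bar f^2(p)\le \frac{C(D)}{|B_{r'}^G(x_0)|}\sum_{y\in B_{r'}^G(x_0)} f^2(y)\,d_y .
\]

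Next I would convert this discrete average into the continuous one on $X$. Choosing $R\ge r'+c_1(D)$, i.e. $r'=R-c_1(D)$, guarantees that $B_{r'}^G(x_0)$ together with the faces adjacent to its vertices sits inside $B_R(p)$; then the extension estimate (\ref{DFF2}) bounds the discrete mass by the continuous one, $\sum_{y\in B_{r'}^G(x_0)} f^2(y)\,d_y\le C(D)\int_{B_R(p)}\bar f^2$. It remains to match the volumes. By the graph--$X$ volume comparison of \cite{HJL}, $|B_{r'}^G(x_0)|$ is comparable to the $X$-volume $|B_{r'}(x_0)|$; and since $r'\ge R/2$ and $d(p,x_0)\le c_0(D)$, the Bishop--Gromov doubling (\ref{VD1}) on $X$ yields $|B_R(p)|\le C(D)\,|B_{r'}(x_0)|\le C(D)\,|B_{r'}^G(x_0)|$. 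Combining the three displays gives $\bar f^2(p)\le \frac{C_2}{|B_R(p)|}\int_{B_R(p)}\bar f^2$ with $C_2=C_2(D)$, and the threshold $R_1(D)$ records the two requirements that $r'=R-c_1(D)$ exceed the scale at which Lemma \ref{MVIOG} applies and that $R$ be large enough to absorb the $c_0(D)$- and $c_1(D)$-shifts (e.g. $r'\ge R/2$).

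The step I expect to be the crux is the consistent matching of radii and volumes across the two opposite directions of transfer: (\ref{DFF1}) pushes the single-point value $\bar f^2(p)$ \emph{down} to discrete vertex data on a graph ball of radius $\sim R$, while (\ref{DFF2}) must push the discrete average \emph{back up} to the continuous average on $B_R(p)$, and these two balls have to be nested with only a $D$-dependent loss. Controlling this loss hinges on the quasi-isometry constants between $d^G$ and $d$ --- geodesics in $X$ may cut across a large face and shortcut the graph metric by an amount proportional to the facial diameter --- which is exactly why the constants $R_1$ and $C_2$ must be allowed to depend on $D$.
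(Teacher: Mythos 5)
Your proposal is correct and follows essentially the same route as the paper's own proof: reduce $\bar{f}^2(p)$ to a vertex value via the maximum principle for the harmonic extension, apply the graph mean value inequality (Lemma \ref{MVIOG}) at that vertex, and transfer back to $X$ by nesting graph balls inside metric balls with $D$-dependent radius shifts (the paper's Lemma \ref{Lem1} together with the relative volume comparison (\ref{RVCA1})) and by bounding the discrete $L^2$-mass by the continuous one face-by-face. The one caveat is attribution rather than substance: what you call the ``extension estimates (\ref{DFF1})(\ref{DFF2})'' are merely the definitions of $\bar{f}$, and the analytic content you actually invoke is the maximum principle (for the pointwise step) and the trace-type inequality (\ref{CONTROL}) of Lemma \ref{EX1} (for the $L^2$ step), which the paper isolates as separate lemmas.
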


Let $P^d(X):=\{u:X\rightarrow \mathds{R}\mid |u(x)|\leq C(d(p,x)+1)^d\}$ denote the space of polynomial growth functions on $X$ with growth rate less than or equal to $d.$ Since the extending map
\begin{equation}\label{ETM1}E:H^d(G)\rightarrow P^d(X),\ f\mapsto
  Ef=\bar{f},\end{equation} is an injective linear operator, it suffices to get the dimension estimate for the image $E(H^d(G)).$ Combining the relative volume comparison (\ref{RVC1}) and the mean value inequality (\ref{MVIOX}), we obtain the optimal dimension estimate for $E(H^d(G)).$

Although we have to pay for extending map $E$ by the loss of harmonicity, it preserves the mean value property which is sufficient for our application. We adopt the argument of the mean value inequality (see \cite{L1,L2,CM4}) to get optimal dimension estimate. In addition, by the special structure of the graph with $\Sec G\geq0$ and $D\geq 43,$ we \cite{HJL} obtained that for any $d>0$,
$$\dim H^d(G)=1,$$ which implies the final theorem of the paper.

\begin{theorem}\label{MTAL} Let $G$ be a semiplanar graph with $\Sec G\geq 0$. Then for any $d\geq 1$,
$$\dim H^d(G)\leq Cd,$$ where $C$ is an absolute constant.
\end{theorem}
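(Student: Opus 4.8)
The plan is to prove $\dim H^d(G)\leq Cd$ by transferring the problem to the Alexandrov space $X=S(G)$ via the injective extension operator $E$ in (\ref{ETM1}) and running the Colding--Minicozzi/Li counting argument for the finite-dimensional image $E(H^d(G))\subset P^d(X)$. Since $E$ is injective and linear, it suffices to bound $\dim E(H^d(G))$. The two inputs I would combine are the mean value inequality (\ref{MVIOX}) of Theorem \ref{MVIIL}, which replaces the subharmonicity of $\bar f^2$ that we no longer have, and the clean relative volume comparison (\ref{RVC1}) on $X$, whose constant is exactly as in the Euclidean/Riemannian case — this is precisely the gain over working on $G$, where the constant $C(D)$ in (\ref{RVCG1}) is not close to $1$.

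First I would set up the counting lemma. Fix a large radius $R$ and consider the bilinear form $A_R(u,v)=\int_{B_R(p)}\bar u\,\bar v$ on the finite-dimensional space $V:=E(H^d(G))$, where $\bar u=Eu$. For any finite-dimensional subspace $K\subseteq V$ of dimension $k$, the standard linear-algebra lemma (see \cite{L1,L2,CM4,CM3}) produces, for each $\beta>1$, a radius scale at which one controls the trace of $A_{\beta R}$ against $A_R$ on an orthonormal basis: roughly, one chooses an $A_{\beta R}$-orthonormal basis $u_1,\dots,u_k$ and estimates $\sum_i \int_{B_R(p)}\bar u_i^2$ from below by the dimension and from above using the polynomial growth bound together with volume doubling. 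Concretely, for a single function the mean value inequality (\ref{MVIOX}) gives the pointwise bound $\bar u^2(q)\leq \frac{C_2}{|B_\rho(q)|}\int_{B_\rho(q)}\bar u^2$ for $\rho\geq R_1(D)$; integrating this and applying Fubini together with (\ref{RVC1}) to compare $|B_\rho(q)|$ with $|B_{\beta R}(p)|$ converts an $L^2$ mass on $B_R$ into a controlled $L^2$ mass on $B_{\beta R}$, which is the engine that lets the mean value inequality substitute for a reverse Poincaré/subharmonic estimate.

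Next I would iterate across dyadic (or $\beta$-adic) scales. Combining the per-scale trace estimate with the growth assumption $|u(x)|\le C(d(p,x)+1)^d$ — which controls $\int_{B_{\beta R}}\bar u^2$ by $\beta^{2d}\int_{B_R}\bar u^2$ up to the volume factor $|B_{\beta R}|/|B_R|\le\beta^2$ from (\ref{RVC1}) — yields an inequality of the form $\dim K\leq C(\beta)\,\beta^{2d+2}$ valid for every $k$-dimensional $K$, hence for $V$ itself after optimizing. The crucial point, exactly as in \cite{CM4,L1}, is that because the volume comparison constant in (\ref{RVC1}) is sharp (the exponent is the true dimension $2$ with constant $1$), choosing $\beta=1+\tfrac1d$ makes $\beta^{2d+2}=(1+\tfrac1d)^{2d+2}$ bounded by an absolute constant, while the combinatorial factors contribute at most a factor linear in $d$; this collapses the naive $d^2$-type bound from \cite{HJL} to the linear bound $\dim E(H^d(G))\le Cd$. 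Finally, injectivity of $E$ gives $\dim H^d(G)=\dim E(H^d(G))\le Cd$, and the separate structural result for $D\ge 43$ quoted in the excerpt handles the degenerate regime, so the absolute constant $C$ is uniform in $D$.

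I expect the main obstacle to be the scale-matching in the counting argument: because (\ref{MVIOX}) only holds for radii $R\ge R_1(D)$ and centers ranging over $B_R(p)$, one must ensure that when the mean value inequality is applied at the small ball $B_\rho(q)$ and then the volume is compared via (\ref{RVC1}) to the large ball $B_{\beta R}(p)$, the resulting constant is genuinely independent of $d$ and absorbs the facial-degree dependence only through $R_1(D)$ and $C_2(D)$, not through the exponent. Making the $\beta=1+1/d$ optimization interact cleanly with the threshold $R_1(D)$ — so that the linear-in-$d$ factor is not secretly inflated to $d^{1+\epsilon}$ or $D$-dependent — is the delicate bookkeeping step, and it is where the sharpness of the Alexandrov volume comparison (\ref{RVC1}), as opposed to the weak graph version (\ref{RVCG1}), is indispensable.
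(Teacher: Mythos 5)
Your outline reproduces the paper's actual proof in every structural respect: transfer to $X=S(G)$ via the injective map $E$ of \eqref{ETM1}, the mean value inequality \eqref{MVIOX} of Theorem \ref{MVIIL} standing in for the lost subharmonicity of $\bar f^2$, the sharp comparison \eqref{RVC1} replacing the weak graph version \eqref{RVCG1}, a trace lower bound for an $A_{\beta R}$-orthonormal basis at suitably chosen radii (the paper's Lemma \ref{PNGL1}, giving $\sum_i A_R(u_i,u_i)\geq k\beta^{-(2d+2+\delta)}$), a trace upper bound $\sum_i A_R(u_i,u_i)\leq C(D)\epsilon^{-1}$ from the mean value inequality plus volume comparison (Lemma \ref{PNGL2}), the optimization $\beta=1+\epsilon$ with $\epsilon\sim 1/d$ (the paper takes $\epsilon=\tfrac{1}{2d}$), and finally the rigidity result $\dim H^d(G)=1$ for $D\geq 43$ from \cite{HJL} to convert $C(D)$ into an absolute constant. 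Even the scale-matching worry you raise at the end is handled exactly as you suspect: Lemma \ref{PNGL1} produces good radii $R$ that can be taken arbitrarily large, so the threshold condition $\epsilon R\geq R_1(D)$ in Lemma \ref{PNGL2} costs nothing in the exponent, and the linear factor in $d$ comes solely from $\epsilon^{-1}$.

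There is, however, one genuine omission. Your appeal to ``the standard linear-algebra lemma'' to extract an $A_{\beta R}$-orthonormal basis of $K$ silently presupposes that $A_R(u,v)=\int_{B_R(p)}uv$ is positive definite on $K$. On a Riemannian manifold this is automatic: a harmonic function vanishing on a ball vanishes identically, by unique continuation. Here it is not automatic, and the paper flags this in the introduction as one of the three essential departures from the manifold case: discrete harmonic functions on graphs do not satisfy unique continuation, and the extended functions $\bar f\in E(H^d(G))$ are not even harmonic on $X$, so a priori a nonzero element of $K$ could vanish identically on $B_{\beta R}(p)$, in which case no orthonormal basis exists and both of your trace inequalities are vacuous. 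The paper closes this gap with Lemma \ref{innerprod} (taken from \cite{Hu2}): for each finite-dimensional $K\subset E(H^d(G))$ there is a radius $R_0(K)$ such that $A_R$ is an inner product on $K$ for all $R\geq R_0(K)$, and all subsequent radii are chosen $\geq R_0(K)$. You must add this step (or prove some substitute for it); with it in place, the rest of your argument is exactly the paper's proof and yields $k\leq C(D)\,\epsilon^{-1}(1+\epsilon)^{2d+2+\delta}\leq C(D)d$, whence $\dim H^d(G)\leq Cd$ after invoking the $D\geq 43$ case.
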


From a superficial glance, it might look as if
 polynomial growth harmonic functions on Riemannian manifolds
 (continuous objects) and those on graphs (discrete ones) are very
 similar and might succumb to an analogous treatment. While our work
 is indeed inspired by certain analogies, there are also some
 important differences which necessitate new ideas which we now wish
 to summarize. Firstly, the unique continuation property for
 (discrete) harmonic functions on graphs fails, leaving us with the
 problem of verifying the inner product property of the bilinear form
 $L^2(B_R)$ on $H^d(G)$ where $B_R$ is the geodesic ball of radius $R$
 in a graph $G$ (see \eqref{innerp1}). We use a lemma in \cite{Hu2}
 (see Lemma \ref{innerprod} in this paper) to overcome this
 difficulty. Secondly, the constant $C(D)$ in the relative volume
 comparison \eqref{RVCG1} on semiplanar graphs with nonnegative
 curvature is not necessarily close to 1. Even on manifolds, it is
 still an open problem to obtain the optimal dimension estimate by
 using \eqref{RVCG1} and \eqref{PIG1}. In this paper, we find an
 argument which transforms the discrete harmonic functions on the
 semiplanar graph $G$ with nonnegative curvature to functions on the
 polygonal surface $S(G)$ that satisfy the mean value inequality. This
 crucial step enables us to transfer the argument to $S(G)$ where we
 have a nice volume comparison \eqref{RVC1} and to obtain the optimal dimension estimate of $H^d(G)$. Thirdly, the combinatorial obstruction for semiplanar graphs with a large face (i.e. $D\geq 43$)  makes the dimension estimate independent of the parameter $D.$

\section{Preliminaries and Notations}
We recall the definition of semiplanar graphs in \cite{HJL}.
\begin{definition} A graph $G=(V,E)$ is called semiplanar if it can be
  embedded into a connected 2-manifold $S$ without self-intersections
  of edges and such that each face is homeomorphic to the closed disk with finite edges as the boundary.
\end{definition}

Let $G=(V,E,F)$ denote the semiplanar graph with the set of
vertices, $V$, edges, $E$ and faces, $F$. Edges and faces are regarded
as closed subsets of $S$, and two objects from $V, E, F$ are called
incident if one is a proper subset of the other. We
always assume that the surface $S$ has no boundary and the graph $G$ is a simple graph, i.e. without loops and
multi-edges. We denote by $d_x$ the degree of the vertex $x\in G$ and
by $\deg(\sigma)$  the degree of the face $\sigma\in F$, i.e. the
number of edges incident to $\sigma$. Further, we assume that $3\leq
d_x< \infty$ and $3\leq \deg(\sigma)<\infty$ for each vertex $x$ and
face $\sigma$, which means that $G$ is a locally finite graph. For
each semiplanar graph $G=(V,E,F)$, there is a unique metric space,
denoted by $S(G)$, which is obtained from replacing each face of $G$
by a regular polygon of side length one with the same facial degree and gluing the faces along the common edges in $S$. $S(G)$ is called the regular polygonal surface of the semiplanar graph $G$.

For a semiplanar graph $G$, the combinatorial curvature at each vertex $x\in G$ is defined as
$$\Phi(x)=1-\frac{d_x}{2}+\sum_{\sigma\ni x}\frac{1}{\deg(\sigma)},$$ where the sum is taken over all the faces incident to $x$. In this paper, we only consider semiplanar graphs with nonnegative combinatorial curvature. It was proved in \cite{HJL} that a semiplanar graph $G$ has nonnegative combinatorial curvature everywhere if and only if the regular polygonal surface $S(G)$ is an Alexandrov space with nonnegative curvature, denoted by $\Sec G\geq0$ or $\Sec S(G)\geq0$.

For Alexandrov spaces and Alexandrov geometry, readers are referred to \cite{BGP,BBI}.
A curve $\gamma$ in a metric space $(X,d)$ is a continuous map $\gamma : [a,b]\rightarrow X.$ The length of a curve $\gamma$ is defined as
$$L(\gamma)=\sup\left\{\sum_{i=1}^Nd(\gamma(y_{i-1}),\gamma(y_i)):
\text{any\ partition }a=y_0<y_1<\ldots<y_N=b\right\}.$$ A curve $\gamma$ is called rectifiable if $L(\gamma)<\infty.$ Given $x, y\in X$, denote by $\Gamma(x,y)$ the set of rectifiable curves joining $x$ and $y$. A metric space $(X,d)$ is called a length space if $d(x,y)=\inf_{\gamma\in\Gamma(x,y)}\{L(\gamma)\},$ for any $x,y \in X$, where $d$ is called the intrinsic metric on $X.$ A curve $\gamma:[a,b]\rightarrow X$ is called a geodesic if $d(\gamma(a),\gamma(b))=L(\gamma).$ It is always true by the definition of the length of a curve that $d(\gamma(a),\gamma(b))\leq L(\gamma).$ A geodesic is a shortest curve (or shortest path) joining the two end points. A geodesic space is a length space $(X,d)$ satisfying that for any $x,y\in X,$ there is a geodesic joining $x$ and $y$.

Denote by $\Pi_{\kappa},$ $\kappa\in \mathds{R}$ the model space which is a 2-dimensional, simply connected space form of constant curvature $\kappa$. Typical ones are $$\Pi_{\kappa}=\left\{
\begin{array}{ll}
\mathds{R}^2,&\kappa=0\\
S^2,&\kappa=1\\
\mathds{H}^2,&\kappa=-1
\end{array}\right..$$ In a geodesic space $(X,d)$, we denote by $\gamma_{xy}$ one of the geodesics joining $x$ and $y$, for $x,y\in X$. Given three points $x, y , z \in X,$ denote by $\triangle_{xyz}$ the geodesic triangle with edges $\gamma_{xy}, \gamma_{yz}, \gamma_{zx}.$ There exists a unique (up to an isometry) geodesic triangle, $\triangle_{\bar x\bar y\bar z}$, in $\Pi_{\kappa}$ ($d(x,y)+d(y,z)+d(z,x)<\frac{2\pi}{\sqrt{\kappa}}$ if ${\kappa>0}$) such that $d(\bar x, \bar y)=d(x,y), d(\bar y, \bar z)=d(y,z)$ and $d(\bar z, \bar x)=d(z,x).$ We call $\triangle_{\bar x\bar y\bar z}$ the comparison triangle in $\Pi_{\kappa}$.

\begin{definition} A complete geodesic space $(X,d)$ is called an Alexandrov space with sectional curvature bounded below by $\kappa$ (Sec$X\geq \kappa$ for short) if for any $p\in X$, there exists a neighborhood $U_p$ of $p$ such that for any $x,y,z\in U_p$, any geodesic triangle $\triangle_{xyz}$, and any $w\in\gamma_{yz}$, letting $\bar w\in \gamma_{\bar y \bar z}$ be in the comparison triangle $\triangle_{\bar x\bar y\bar z}$ in $\Pi_{\kappa}$ satisfying $d(\bar y,\bar w)=d(y,w)$ and $d(\bar w,\bar z)=d(w,z)$,  we have $$d(x,w)\geq d(\bar x,\bar w).$$
\end{definition}

In other words, an Alexandrov space $(X,d)$ is a geodesic space which
locally satisfies the Toponogov triangle comparison theorem for the sectional curvature.
It was proved in \cite{BGP} that the Hausdorff dimension of an Alexandrov space $(X,d)$, $\dim_H(X)$, is an integer or infinity. In this paper, we only consider 2-dimensional Alexandrov spaces with $\Sec X\geq0$.

Let $G$ be a semiplanar graph with nonnegative combinatorial curvature. Let $X:=S(G)$ be the regular polygonal surface of $G$ with the intrinsic metric $d$. Then $\Sec X\geq0.$ Let $B_R(p)$ denote the closed geodesic ball centered at $p\in X$ of radius $R>0$, i.e. $B_R(p)=\{x\in X: d(p,x)\leq R\},$ $|B_R(p)|:=H^2(B_R(p))$ denote the volume of $B_R(p),$ i.e. $2$-dimensional Hausdorff measure of $B_R(p).$ The well known Bishop-Gromov volume comparison theorem holds on Alexandrov spaces \cite{BBI}.

\begin{lemma} Let $(X,d)$ be an $2$-dimensional Alexandrov space with nonnegative curvature, i.e. $\Sec X\geq0.$ Then for any $p\in X, 0<r<R,$ it holds that
\begin{equation}\label{RVCA1}\frac{|B_R(p)|}{|B_r(p)|}\leq\left(\frac{R}{r}\right)^2.\end{equation}
\begin{equation}\label{VDA1}|B_{2R}(p)|\leq4 |B_R(p)|.\end{equation}
\end{lemma}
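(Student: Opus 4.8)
The plan is to reduce both inequalities to a single monotonicity statement about the one-dimensional measure of the metric spheres $S_t(p):=\{x\in X:d(p,x)=t\}$, and to extract that monotonicity from Toponogov's comparison. The definition only supplies the comparison locally, so first I would invoke the Toponogov globalization theorem for complete nonnegatively curved Alexandrov spaces (see \cite{BGP,BBI}) to obtain the global hinge and angle comparisons on all of $X$. Writing $\ell(t):=H^1(S_t(p))$, the goal of the whole computation is the claim that $t\mapsto \ell(t)/t$ is non-increasing on $(0,\infty)$; everything else is packaging.

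I would set up polar coordinates around $p$. Since $\rho:=d(p,\cdot)$ is $1$-Lipschitz with $|\triangledown\rho|=1$ almost everywhere (the exceptional set being the $H^2$-null set where $\rho$ fails to be a regular distance function), the coarea formula on $X$ yields the decomposition $|B_R(p)|=\int_0^R \ell(t)\,dt$, reducing the two inequalities to a statement about the single function $\ell$. Measurability of $t\mapsto\ell(t)$ and the validity of coarea on Alexandrov spaces are standard and I would simply cite them.

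The geometric heart is the monotonicity of $\ell(t)/t$. For two rays $\gamma_\xi,\gamma_\eta$ emanating from $p$, the comparison angle $\tilde\angle_p\big(\gamma_\xi(t),\gamma_\eta(t)\big)$ computed in the Euclidean model $\Pi_0$ is non-increasing in $t$ by the angle-monotonicity built into $\Sec X\ge 0$. Since the comparison triangle with two sides of length $t$ and included angle $\beta(t)=\tilde\angle_p$ has opposite side $d\big(\gamma_\xi(t),\gamma_\eta(t)\big)=2t\sin(\beta(t)/2)$, dividing by $t$ shows that $d\big(\gamma_\xi(t),\gamma_\eta(t)\big)/t$ is non-increasing in $t$. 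Consequently, for $t_1<t_2$ the outward radial map $\Psi:\gamma(t_1)\mapsto\gamma(t_2)$ (defined on the full-measure set of $x\in S_{t_1}(p)$ whose minimizing geodesic from $p$ prolongs to length $t_2$) is $(t_2/t_1)$-Lipschitz for the ambient metric $d$. Because every $z\in S_{t_2}(p)$ is the far endpoint of a minimizer from $p$ that meets $S_{t_1}(p)$ at radius $t_1$, the image of $\Psi$ exhausts $S_{t_2}(p)$ up to a null set, and the Lipschitz bound gives $\ell(t_2)\le(t_2/t_1)\,\ell(t_1)$, i.e. $\ell(t)/t$ is non-increasing.

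With this in hand the conclusion is a routine integration lemma. From $\ell(t)/t\ge \ell(R)/R$ for $t\le R$ one gets $2\int_0^R\ell(t)\,dt\ge R\,\ell(R)$, whence $\frac{d}{dR}\big(|B_R(p)|/R^2\big)=\big(R\ell(R)-2|B_R(p)|\big)/R^3\le 0$; thus $|B_R(p)|/R^2$ is non-increasing, which is exactly \eqref{RVCA1}, and \eqref{VDA1} is the special case $r=R,\ 2R$. The main obstacle is not the geometry, which is soft once global Toponogov and the angle monotonicity are available, but the measure-theoretic bookkeeping of the radial map: one must verify that the set of points joined to $p$ by a non-unique geodesic and the singular set of $X$ are $H^2$-negligible, so that $\Psi$ is defined almost everywhere, is genuinely $(t_2/t_1)$-Lipschitz, and covers $S_{t_2}(p)$ up to measure zero. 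These points rely on the standard regularity theory of nonnegatively curved Alexandrov surfaces, which I would quote from \cite{BGP,BBI} rather than reprove.
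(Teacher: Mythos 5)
Your proposal is correct in substance, but note that the paper does not prove this lemma at all: it is quoted as the ``well known Bishop--Gromov volume comparison'' with a citation to \cite{BBI}, so there is no in-paper argument to compare against. What you have written out is essentially the standard proof behind that citation: globalize Toponogov, use angle monotonicity to show that the outward radial map along minimizers from $p$ is $(t_2/t_1)$-Lipschitz, deduce that $\ell(t)/t$ is non-increasing for $\ell(t)=H^1(S_t(p))$, and integrate via the coarea decomposition $|B_R(p)|=\int_0^R\ell(t)\,dt$ to get that $|B_R(p)|/R^2$ is non-increasing, which yields \eqref{RVCA1} and, as a special case, \eqref{VDA1}. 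Three small points deserve more care than your sketch gives them. First, your map $\Psi$ is in fact single-valued and surjective onto $S_{t_2}(p)$, not merely defined and covering up to null sets: non-branching of geodesics in Alexandrov spaces shows that a minimizer from $p$ through a fixed $x\in S_{t_1}(p)$ admits at most one extension to a minimizer of length $t_2$, so you could invoke non-branching and drop the measure-zero bookkeeping for this step entirely (alternatively, the inward contraction $S_{t_2}\to S_{t_1}$ satisfies $d(\Phi x,\Phi y)\geq (t_1/t_2)\,d(x,y)$ by the same angle monotonicity, and since it is everywhere defined this gives $\ell(t_2)\leq (t_2/t_1)\ell(t_1)$ with no domain issues). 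Second, your integration step uses the \emph{equality} $|B_R(p)|=\int_0^R\ell(t)\,dt$, which is stronger than the Eilenberg/coarea \emph{inequality} valid for arbitrary $1$-Lipschitz functions; on $2$-dimensional Alexandrov spaces the exact formula does hold, but it rests on the rectifiability and regularity theory of such surfaces rather than on $|\triangledown\rho|=1$ a.e.\ alone, so the citation is carrying real weight there. Third, the differentiation of $|B_R(p)|/R^2$ requires $R\mapsto|B_R(p)|$ to be absolutely continuous, which your coarea representation does supply (so this is fine, but worth saying). With these points made precise, your argument is a complete and correct proof of the statement the paper imports from \cite{BBI}.
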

We call (\ref{RVCA1}) the relative volume comparison and (\ref{VDA1}) the volume doubling property.

For any precompact domain $\Omega\subset X,$ we denote by $Lip(\Omega)$ the set of Lipschitz functions on $\Omega$. For any $f\in Lip(\Omega)$, the $W^{1,2}$ norm of $f$ is  defined as
$$\|f\|_{W^{1,2}(\Omega)}^2=\int_{\Omega}f^2+\int_{\Omega}|\nabla f|^2.$$ The $W^{1,2}$ space on $\Omega$, denoted by $W^{1,2}(\Omega)$, is the completion of $Lip(\Omega)$ with respect to the $W^{1,2}$ norm. A function $f\in W^{1,2}_{loc}(X)$ if for any precompact domain $\Omega\subset\subset X$, $f|_{\Omega}\in W^{1,2}(\Omega)$. The Poincar\'e inequality was proved in \cite{KMS,Hu1}.
\begin{lemma} Let $(X,d)$ be an $2$-dimensional Alexandrov space
  with $\Sec X\geq 0$ and $u\in W^{1,2}_{loc}(X),$ then
\begin{equation}\label{PIA1}\int_{B_R(p)}|u-u_{B_R}|^2\leq C(n)R^2\int_{B_R(p)}|\triangledown u|^2,\end{equation} where $u_{B_R}=\frac{1}{|B_R(p)|}\int_{B_R(p)}u.$
\end{lemma}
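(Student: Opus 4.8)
The plan is to reduce the inequality to an integrated distance estimate along geodesics and then invoke a segment inequality whose proof rests on the Bishop--Gromov comparison (\ref{RVCA1}) already at our disposal. Since $W^{1,2}(\Omega)$ is by definition the $\|\cdot\|_{W^{1,2}}$-completion of $Lip(\Omega)$, and both sides of (\ref{PIA1}) are continuous under this completion, it suffices to prove the estimate for $u\in Lip(B_{2R}(p))$; the general $W^{1,2}_{loc}$ case then follows by approximation. Throughout I would work on the set of regular points, whose complement (the singular set of the $2$-dimensional Alexandrov space, which is at most countable) has vanishing $H^2$-measure, so that the a.e.-defined gradient norm $|\nabla u|$ and the polar-coordinate description of the measure are available.

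First I would carry out the standard reduction. Writing $u(x)-u_{B_R}=\frac{1}{|B_R(p)|}\int_{B_R(p)}(u(x)-u(y))\,dy$ and applying Jensen's inequality gives
\begin{equation}
\int_{B_R(p)}|u-u_{B_R}|^2\le\frac{1}{|B_R(p)|}\int_{B_R(p)}\int_{B_R(p)}|u(x)-u(y)|^2\,dy\,dx.
\end{equation}
For a.e. pair $(x,y)$ the minimal geodesic $\gamma_{xy}$ is unique; parametrising it by arclength, using that $|\nabla u|$ dominates the derivative of $u$ along $\gamma_{xy}$, and applying Cauchy--Schwarz yields $|u(x)-u(y)|^2\le d(x,y)\,\mathcal{F}(x,y)$, where $\mathcal{F}(x,y):=\int_{\gamma_{xy}}|\nabla u|^2\,ds$ and $d(x,y)\le 2R$ on $B_R(p)\times B_R(p)$. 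Hence everything is reduced to bounding $\int_{B_R\times B_R}\mathcal{F}(x,y)\,dx\,dy$.

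The heart of the argument, and the step I expect to be the main obstacle, is the segment inequality
\begin{equation}
\int_{B_R(p)}\int_{B_R(p)}\mathcal{F}(x,y)\,dx\,dy\le C\,R\,\frac{|B_R(p)|^2}{|B_{2R}(p)|}\int_{B_{2R}(p)}|\nabla u|^2.
\end{equation}
To prove it I would fix $x$, split the geodesic integral at its midpoint, and treat the half near $x$ by introducing geodesic polar coordinates $(\rho,\theta)$ about $x$, in which the measure is $\mathcal{A}(\rho,\theta)\,d\rho\,d\theta$. The monotonicity of $\mathcal{A}(\rho,\theta)/\rho^{n-1}$ in $\rho$ --- precisely the infinitesimal form of (\ref{RVCA1}) for $\Sec X\ge 0$ --- controls the Jacobian; after interchanging the order of integration in the arclength and radial variables and then integrating the outer variable $x$ over $B_R(p)$, this produces the claimed bound, and the half near $y$ is symmetric. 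The delicate points are the justification of polar coordinates and the coarea-type change of variables on the singular Alexandrov space, where $\exp_x$ is only Lipschitz and geodesics branch on a null set; for this I would appeal to the regularity theory underlying \cite{KMS, Hu1}.

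Combining the two displays gives $\int_{B_R}|u-u_{B_R}|^2\le C R^2\frac{|B_R|}{|B_{2R}|}\int_{B_{2R}}|\nabla u|^2\le CR^2\int_{B_{2R}}|\nabla u|^2$, that is, the weak form with the enlarged ball $B_{2R}(p)$ on the right. To reach the stated strong form, with $B_R(p)$ on both sides, I would finally run a Whitney-type chaining argument, valid because $X$ is geodesic and satisfies the doubling property (\ref{VDA1}) uniformly; this self-improvement (in the spirit of Haj\l{}asz--Koskela) upgrades the weak Poincar\'e inequality to the strong one at the cost of enlarging the constant, which, since $\dim X=2$, remains absolute (the nominal dependence on $n$ in (\ref{PIA1}) enters only through $n=2$).
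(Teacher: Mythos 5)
This lemma is not proved in the paper at all: it is imported as a known result, with the one-line attribution ``The Poincar\'e inequality was proved in \cite{KMS,Hu1}.'' So there is no in-paper proof to match, and your attempt has to be judged against the literature. What you have written is essentially the standard Cheeger--Colding route, adapted to Alexandrov spaces, and it is the same circle of ideas by which the cited references establish the inequality: reduce by density to Lipschitz functions, apply Jensen to the double integral $\iint|u(x)-u(y)|^2$, bound differences along a.e.\ unique minimal geodesics via the upper-gradient inequality and Cauchy--Schwarz, prove the segment inequality by splitting at the midpoint and using the radial Jacobian comparison in polar coordinates, and finally upgrade the weak inequality (with $B_{2R}(p)$ on the right) to the strong form \eqref{PIA1} by a Haj\l{}asz--Koskela chaining argument in the geodesic doubling space, using \eqref{VDA1}. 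As an outline this is sound, and you correctly isolate the genuinely delicate points (uniqueness of geodesics off a null set, measurable polar coordinates on a singular space, an everywhere-defined Borel upper-gradient representative so that line integrals of the a.e.-defined $|\nabla u|$ make sense).

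One technical correction is worth recording. The monotonicity of $\mathcal{A}(\rho,\theta)/\rho^{n-1}$ is \emph{not} ``the infinitesimal form of \eqref{RVCA1}'' in the sense of being derivable from it: the integrated ball-volume comparison does not imply the pointwise Jacobian monotonicity, and on a singular space one cannot differentiate \eqref{RVCA1} to recover it. You must instead invoke the curvature hypothesis $\Sec X\geq 0$ directly, via the radial comparison for Alexandrov spaces (equivalently the measure contraction property, or the non-expanding behaviour of the radial map relative to the cone), which is exactly what is established in the regularity theory behind \cite{KMS} and used in \cite{Hu1}. With that substitution, and noting that in dimension $2$ the singular set is at most countable so all the a.e.\ statements are comfortably available, your argument is a legitimate reconstruction of the cited proof; the final constant is absolute since $n=2$.
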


Let $G=(V,E,F)$ be a semiplanar graph with nonnegative combinatorial curvature and $X:=S(G)$ be the regular polygonal surface of $G$. Then it is straightforward that $3\leq d_x\leq 6$ for $\forall x\in G,$ i.e. $G$ has bounded degree. We denote by $D:=D_G:=\sup\{\deg(\sigma):\sigma\in F\}$ the maximal degree of faces in $G,$ which is a very important parameter in our discussion (it is finite by Gauss-Bonnet formula in \cite{DM,CC}).  For any $x,y\in G,$ they are called neighbors, denoted by $x\sim y,$ if there is an edge in $E$ connecting $x$ and $y.$ There is a natural metric on the graph $G,$ $d^G(x,y):=\inf\{k: \exists x=x_0\sim\cdots\sim x_k=y\},$ i.e. the length of the shortest path connecting $x$ and $y$ by assigning each edge the length one. Lemma 3.1 in \cite{HJL} implies that the two metrics, $d^G$ and $d,$ on $G$ are bi-Lipschitz equivalent, i.e. there exists a universal constant $C$ such that for any $x,y\in G$
\begin{equation}\label{LEM2}Cd^{G}(x,y)\leq d(x,y)\leq d^{G}(x,y).\end{equation}

For any $p\in G$ and $R>0,$ we denote by $B_R^G(p)=\{x\in G: d^{G}(p,x)\leq R\}$ the closed geodesic ball in the graph $G,$ by $|B_R^G(p)|:=\sum_{x\in B_R(p)}d_x$ the volume of $B_R^G(p),$ and by $\sharp B_R^G(p)$ the number of vertices in the closed geodesic ball $B_R^G(p).$ Since $3\leq d_x\leq 6$ for any $x\in G,$ $|B_R^G(p)|$ and $\sharp B_R^G(p)$ are equivalent up to a constant, i.e. $3 \sharp B_R^G(p)\leq|B_R^G(p)|\leq 6 \sharp B_R^G(p),$ for any $p\in G$ and $R>0.$ The following volume comparison on $G$ was proved in \cite{HJL} by the relative volume comparison (\ref{RVCA1}) on $X.$

\begin{lemma}\label{GRV} Let $G=(V,E,F)$ be a semiplanar graph with $\Sec G\geq0$. Then there exists a constant $C(D)$ depending on $D,$ such
that for any $p\in G$ and $0<r<R,$ we have
\begin{equation}\label{GRV1}\frac{|B_R^G(p)|}{|B_r^G(p)|}\leq C(D)\left(\frac{R}{r}\right)^2.\end{equation}
\begin{equation}\label{GVD1}|B_{2R}^G(p)|\leq C(D) |B_R^G(p)|.\end{equation}
\end{lemma}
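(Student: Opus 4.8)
The plan is to deduce the graph volume comparison \eqref{GRV1} from the Alexandrov comparison \eqref{RVCA1} by showing that the combinatorial volume $|B_R^G(p)|$ and the Hausdorff volume $|B_R(p)|$ of the corresponding metric ball in $X=S(G)$ agree up to a multiplicative constant depending only on $D$ and up to a bounded shift of the radius. Once this two-sided comparison is in place, \eqref{GRV1} is immediate from \eqref{RVCA1}, and \eqref{GVD1} then follows by taking $r=R$ and replacing $R$ by $2R$.

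The geometric input is that every face of $X$ is a regular polygon of side length $1$ with between $3$ and $D$ sides, so each face $\sigma$ has area bounded below by $\frac{\sqrt3}{4}$ (the equilateral triangle) and above by $A(D):=\frac{D}{4}\cot\frac{\pi}{D}$ (the regular $D$-gon), and diameter at most $\rho(D):=\frac{1}{\sin(\pi/D)}$. For the upper bound I use that $B_R^G(p)\subseteq B_R(p)$ (since $d\le d^G$ by \eqref{LEM2}); every face incident to a vertex of $B_R^G(p)$ then lies in $B_{R+\rho(D)}(p)$, and as these faces have disjoint interiors their total area is at most $|B_{R+\rho(D)}(p)|$. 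Counting vertex--face incidences, $|B_R^G(p)|=\sum_{x\in B_R^G(p)}d_x$ is exactly the number of such incidences, which is at most $D$ times the number of faces involved; combined with the lower area bound this yields
\begin{equation}\label{upperb}|B_R^G(p)|\le \frac{4D}{\sqrt3}\,|B_{R+\rho(D)}(p)|.\end{equation}
For the lower bound, using $d^G\le d/C$ from \eqref{LEM2} one checks that any point of $B_{CR-\rho(D)}(p)$ lies in a face all of whose vertices belong to $B_R^G(p)$; summing the upper area bound $A(D)$ over these faces and noting that each contributes at least one incidence gives
\begin{equation}\label{lowerb}|B_R^G(p)|\ge \frac{1}{A(D)}\,|B_{CR-\rho(D)}(p)|,\qquad CR>\rho(D).\end{equation}

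Combining \eqref{upperb}, \eqref{lowerb} and \eqref{RVCA1} gives, whenever $Cr>\rho(D)$,
\begin{equation}\label{combined}\frac{|B_R^G(p)|}{|B_r^G(p)|}\le \frac{4DA(D)}{\sqrt3}\left(\frac{R+\rho(D)}{Cr-\rho(D)}\right)^2.\end{equation}
The main technical point is the bookkeeping of the radius shifts: the right-hand side of \eqref{combined} is comparable to $(R/r)^2$ only once $r$ is large compared to $\rho(D)/C$, so I would split into two regimes. For $r\ge 2\rho(D)/C$ one has $Cr-\rho(D)\ge Cr/2$ and, since $R>r\ge 2\rho(D)$, also $R+\rho(D)\le \frac32 R$, so \eqref{combined} yields $\le C(D)(R/r)^2$. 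For small $r<2\rho(D)/C$ I instead bound the numerator directly by \eqref{upperb} together with the elementary consequence $|B_\rho(p)|\le\pi\rho^2$ of \eqref{RVCA1}, and the denominator from below by $|B_r^G(p)|\ge d_p\ge 3$; since $R>r$ one has $\frac{(R+\rho(D))^2}{R^2/r^2}\le (r+\rho(D))^2\le\left(\tfrac{2\rho(D)}{C}+\rho(D)\right)^2$, a constant depending only on $D$, and \eqref{GRV1} follows in this regime as well.

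I expect the only genuine obstacle to be this radius bookkeeping, together with verifying the elementary bound $|B_\rho(p)|\le\pi\rho^2$, which follows by letting the inner radius tend to $0$ in \eqref{RVCA1} and using that the density at a point of a nonnegatively curved surface is at most $1$ (the total angle at a vertex is at most $2\pi$ by nonnegative combinatorial curvature). The geometric comparison \eqref{upperb}--\eqref{lowerb} is a routine incidence count once the face area and diameter bounds are in hand. Finally, \eqref{GVD1} is obtained from \eqref{GRV1} by setting $r=R$ and replacing $R$ by $2R$, giving $|B_{2R}^G(p)|\le 4C(D)|B_R^G(p)|$.
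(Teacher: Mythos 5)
Your proposal is correct and takes essentially the same route as the paper: the paper does not reprove this lemma but states it was proved in \cite{HJL} precisely by transferring the relative volume comparison \eqref{RVCA1} on $X=S(G)$ to $G$, and your two-sided comparison between $|B^G_R(p)|$ and Hausdorff volumes of metric balls---via corner/incidence counting over faces with bounded area and diameter, with radius shifts absorbed through the bi-Lipschitz equivalence \eqref{LEM2}---is exactly the technique the paper itself displays in the proof of Lemma \ref{Lem1} (cf.\ (\ref{VC11})--(\ref{VC33})). Your handling of the small-radius regime via $|B_s(p)|\leq \pi s^2$ (density at most $1$ since the total angle at each vertex is at most $2\pi$ when $\Sec G\geq 0$) and $|B^G_r(p)|\geq d_p\geq 3$ is a sound way to close the radius bookkeeping.
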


We call (\ref{GRV1}) the weak relative volume comparison and (\ref{GVD1}) the volume doubling property on $G.$ The Poincar\'e inequality on $G$ was also obtained in \cite{HJL} by the Poincar\'e inequality (\ref{PIA1}).

\begin{lemma} Let $G$ be a semiplanar graph with $\Sec G\geq 0$. Then there exist two constants $C(D)$ and $C$ such that for any $p\in G, R>0, f:B_{CR}^G(p)\rightarrow \mathds{R}$, we have
\begin{equation}\label{PII1}\sum_{x\in B_R^G(p)}(f(x)-f_{B_R^G})^2d_x\leq C(D)R^2\sum_{x,y\in B_{CR}^G(p);x\sim y}(f(x)-f(y))^2,\end{equation}
where $f_{B_R^G}=\frac{1}{|B_R^G(p)|}\sum_{x\in B_R^G(p)}f(x)d_x.$
\end{lemma}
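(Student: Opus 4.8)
The plan is to transfer the estimate to the Alexandrov surface $X=S(G)$, where the continuous Poincar\'e inequality (\ref{PIA1}) is available, and then translate back to $G$ using the bi-Lipschitz equivalence (\ref{LEM2}) of the two metrics. Given $f\colon B_{CR}^G(p)\to\mathds{R}$, I would first construct a Lipschitz extension $\bar f$ on $X$ as follows: set $\bar f(x)=f(x)$ at every vertex $x$, interpolate linearly along each edge (which has length one), and extend over each face $\sigma$ by coning from the barycentric value $c_\sigma=\frac{1}{\deg(\sigma)}\sum_{x\in\sigma}f(x)$ to the (now piecewise-linear) boundary data on $\partial\sigma$. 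The whole argument then rests on two one-sided comparisons between the discrete and continuous energies, after which (\ref{PIA1}) does the essential work.

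The first comparison bounds the Dirichlet energy of $\bar f$ by the discrete Dirichlet energy. On an edge the linear interpolant has gradient $|f(x)-f(y)|$, and on a face $\sigma$ the coning construction gives $|\nabla\bar f|$ controlled pointwise by the differences $|f(x)-c_\sigma|$ and by consecutive boundary differences. Since two vertices of a single face are joined by at most $\deg(\sigma)\le D$ consecutive edges, Cauchy--Schwarz turns any such difference into $\deg(\sigma)$ times a sum of squared edge-differences around $\sigma$; as the area of a regular $\deg(\sigma)$-gon of side one is at most a constant depending on $D$, integrating yields
\begin{equation*}\int_{\sigma}|\nabla\bar f|^2\le C(D)\sum_{x,y\in\sigma;\,x\sim y}(f(x)-f(y))^2.\end{equation*}
Summing over all faces meeting the relevant ball, and using that each edge borders exactly two faces, gives $\int|\nabla\bar f|^2\le C(D)\sum_{x\sim y}(f(x)-f(y))^2$.

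The second comparison controls the discrete variance by the continuous one. Writing $\bar c:=\bar f_{B_{R'}}$ for the continuous average over a ball $B_{R'}(p)$ on $X$ with $R'\asymp R$, the minimising property of $f_{B_R^G}$ among constants gives $\sum_{x\in B_R^G}(f(x)-f_{B_R^G})^2 d_x\le\sum_x(\bar f(x)-\bar c)^2 d_x$. To each vertex $x$ I would attach a region $\Omega_x\subset X$ of area comparable to one (e.g.\ the union of the corner wedges of the faces at $x$), splitting $(\bar f(x)-\bar c)^2|\Omega_x|\le 2\int_{\Omega_x}(\bar f-\bar c)^2+2\int_{\Omega_x}(\bar f-\bar f(x))^2$. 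The second integral is an \emph{anchored} Poincar\'e term: integrating $\nabla\bar f$ along geodesics emanating from $x$ inside the bounded-geometry region $\Omega_x$ and applying Cauchy--Schwarz bounds it by $C(D)\int_{\Omega_x'}|\nabla\bar f|^2$ on a slightly enlarged region. Because the $\Omega_x$ and $\Omega_x'$ have overlap multiplicity bounded in terms of $D$, summing over $x\in B_R^G(p)$ produces single integrals over a ball of comparable radius, so that
\begin{equation*}\sum_{x\in B_R^G}(f(x)-f_{B_R^G})^2 d_x\le C(D)\Big(\int_{B_{R'}(p)}(\bar f-\bar c)^2+\int_{B_{R''}(p)}|\nabla\bar f|^2\Big).\end{equation*}
Applying (\ref{PIA1}) to the first term (with $\bar c=\bar f_{B_{R'}}$) bounds it by $C R'^2\int_{B_{R'}(p)}|\nabla\bar f|^2$, and then the first comparison turns every gradient integral into the discrete Dirichlet energy over the edges inside $B_{CR}^G(p)$. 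The radius bookkeeping --- that all faces and anchored regions used on $X$ sit inside the graph ball $B_{CR}^G(p)$ --- is exactly where the enlargement constant $C$ enters, and is handled by the inclusions $B_R^G(p)\subset B_R(p)$ and $B_r(p)\subset B_{r/C}^G(p)$ coming from (\ref{LEM2}).

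I expect the main obstacle to be the anchored Poincar\'e estimate together with the overlap control: one must choose the regions $\Omega_x$ so that they cover a neighbourhood of $B_R^G(p)$ with multiplicity depending only on $D$, have uniformly bounded geometry so that the point-value estimate $\int_{\Omega_x}(\bar f-\bar f(x))^2\le C(D)\int_{\Omega_x'}|\nabla\bar f|^2$ holds with a uniform constant, and still be compatible with the comparison between the discrete weights $d_x$ and the two-dimensional Hausdorff measure on $X$. Everything else is routine once these geometric choices are fixed.
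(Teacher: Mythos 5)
The paper gives no proof of this lemma at all: it is quoted from the companion paper \cite{HJL}, and the sentence preceding the statement records exactly the strategy you propose --- the discrete inequality is obtained from the continuous Poincar\'e inequality (\ref{PIA1}) on $X=S(G)$ by extending $f$ to the polygonal surface, comparing discrete and continuous energies with constants $C(D)$, and using the bi-Lipschitz equivalence (\ref{LEM2}) for the radius bookkeeping. So your route is essentially the intended one. The one place you genuinely diverge in detail is the extension itself: \cite{HJL}, like the present paper in (\ref{DFF1})--(\ref{DFF2}) and Lemma \ref{EX1}, extends over each face by the harmonic extension conjugated through the bi-Lipschitz maps $L_n$ onto disks, whereas you cone affinely from the barycentric average; both yield the needed energy comparisons with $C(D)$-constants, and your version is more elementary. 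One step needs to be stated more carefully: the anchored estimate $\int_{\Omega_x}(\bar f-\bar f(x))^2\leq C(D)\int_{\Omega_x'}|\nabla\bar f|^2$ is \emph{false} for general $W^{1,2}$ functions in two dimensions (point values are not controlled in $H^1$), and your integration-along-geodesics argument, after Cauchy--Schwarz, produces the singular weight $d(x,\cdot)^{-1}$ against $|\nabla\bar f|^2$ near $x$. It is rescued here only because your extension is piecewise affine on a fan decomposition with $C(D)$-bounded geometry, so $|\nabla\bar f|$ is constant on each piece and its supremum equals its average; either say this explicitly, or avoid the anchored estimate altogether by replacing $\bar f(x)$ with the mean of $\bar f$ over $\Omega_x$ and bounding $f(x)$ minus that mean by the squared edge differences around the faces at $x$ via Cauchy--Schwarz (using $\deg(\sigma)\leq D$). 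With that point repaired, your proof is sound and matches the approach the paper attributes to \cite{HJL}.
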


\section{Mean Value Inequality}
In this section, we extend each harmonic function on the semiplanar
graph $G$ with nonnegative combinatorial curvature to a function on
$X:=S(G)$ which is  almost harmonic in the sense that it satisfies the mean value inequality on $X.$

For any $\Omega\subset G$ and $x\in G,$ we define $d^G(x,\Omega):=\inf\{d^G(x,y)\mid y\in \Omega\}.$ We denote $\partial\Omega:=\{x\in G\mid d(x,\Omega)=1\}$ and $\bar{\Omega}:=\Omega\cup\partial \Omega.$ The function $f$ is called harmonic on $\Omega$ if $f:\bar{\Omega}\rightarrow \mathds{R}$ satisfies
$$Lf(x):=\frac{1}{d_x}\sum_{y\sim x}(f(y)-f(x))=0,$$ for any $x\in\Omega,$ where $L$ is called the Laplacian operator.

Since the volume doubling property (\ref{GVD1}) and the Poincar\'e inequality (\ref{PII1}) are obtained on the semiplanar graph with nonnegative combinatorial curvature, the Moser iteration can be carried out (see \cite{D2,HS}).

\begin{lemma}[Harnack inequality]\label{HNKIG} Let $G$ be a semiplanar graph with $\Sec G\geq0$. Then there exist constants $C_1(D)$ and $C_2(D)$ such that for any $p\in G,$ $R\geq1$ and any positive harmonic function $f$ on $B_{C_1R}^G(p)$ we have
\begin{equation}\label{HNKIG1}\max_{B_R^G(p)}f\leq C_2 \min_{B_R^G(p)}f.\end{equation}
\end{lemma}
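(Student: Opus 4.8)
The plan is to carry out the classical Moser iteration in the discrete setting, using exactly the two structural inputs already established for $G$: the volume doubling property \eqref{GVD1} and the Poincar\'e inequality \eqref{PII1}. These two ingredients are well known to be equivalent to the (parabolic, hence elliptic) Harnack inequality on graphs of bounded geometry, and here the geometry is uniformly controlled since $3\le d_x\le 6$ and every edge carries weight one; see Delmotte \cite{D2} and \cite{HS}. First I would combine \eqref{GVD1} and \eqref{PII1} to produce a Sobolev-type inequality on balls with a dimensional gain, i.e. a reverse-H\"older exponent $\nu>1$ depending only on $D$; this is the standard deduction of Sobolev from doubling plus Poincar\'e, and all constants track back to the $C(D)$ appearing in those two estimates.

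With the Sobolev inequality in hand, the next step is a discrete Caccioppoli (energy) estimate. Fixing a cutoff $\phi$ supported in a ball and equal to $1$ on a concentric smaller ball, I would test the equation $Lf=0$ against $\phi^2 f^{2q-1}$ and sum by parts over the edges of $B_{C_1R}^G(p)$, with respect to the weighted vertex measure. Because the discrete chain rule is only approximate, the mean value theorem $f^q(y)-f^q(x)=q\,\xi^{q-1}(f(y)-f(x))$ must be used to convert increments of $f^q$ into increments of $f$, and the error terms generated by summation by parts are absorbed using the bounded degree. This yields, for each exponent $q$, control of $\sum \phi^2|\nabla f^q|^2$ by $\sum|\nabla\phi|^2 f^{2q}$. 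Feeding this into the Sobolev inequality and iterating over the geometric sequence of exponents $q=q_0\nu^k$ produces the $L^{q_0}$-to-$L^\infty$ bound $\max_{B_r}f\le C\big(|B_{2r}|^{-1}\sum_{B_{2r}}f^{q_0}\big)^{1/q_0}$ for any $q_0>0$. Running the same iteration with negative exponents (equivalently, with the positive function $f^{-1}$) gives the dual lower bound $\min_{B_r}f\ge c\big(|B_{2r}|^{-1}\sum_{B_{2r}}f^{-q_0}\big)^{-1/q_0}$.

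The crucial bridging step, and the one I expect to be the main obstacle, is to connect the positive and negative moments, namely to show that for some small $q_0>0$ depending only on $D$ one has $\big(|B_{2r}|^{-1}\sum f^{q_0}\big)^{1/q_0}\le C\big(|B_{2r}|^{-1}\sum f^{-q_0}\big)^{-1/q_0}$. Here I would test $Lf=0$ against $\phi^2/f$ and set $w=\log f$; summation by parts yields $\sum\phi^2|\nabla w|^2\lesssim \sum|\nabla\phi|^2$, and combining this with the Poincar\'e inequality \eqref{PII1} shows that $w$ has bounded mean oscillation on $B_{2r}$ with a $D$-dependent BMO norm. A discrete John--Nirenberg lemma then supplies the required crossover between $e^{q_0 w}$ and $e^{-q_0 w}$. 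The discreteness enters delicately in two places: the approximate chain rule in the Caccioppoli step, and the verification that the John--Nirenberg argument survives passage to the graph; both are handled by the bounded degree and by working on the enlarged ball $B_{C_1R}^G(p)$, which also accounts for the radius loss already built into \eqref{PII1}. Chaining the three estimates, $\max_{B_R}f\le C(D)(\cdots)^{1/q_0}\le C(D)(\cdots)^{-1/q_0}\le C(D)\min_{B_R}f$, yields \eqref{HNKIG1} with $C_2=C_2(D)$ and a suitable $C_1=C_1(D)$.
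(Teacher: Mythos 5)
Your proposal is correct and follows exactly the route the paper intends: the paper gives no detailed proof of Lemma \ref{HNKIG}, simply noting that once the volume doubling property \eqref{GVD1} and the Poincar\'e inequality \eqref{PII1} are available, the Moser iteration can be carried out as in Delmotte \cite{D2} and Holopainen--Soardi \cite{HS}. Your outline (Sobolev from doubling plus Poincar\'e, discrete Caccioppoli, iteration for positive and negative exponents, and the $\log f$/John--Nirenberg crossover) is precisely that standard argument, so it matches the paper's approach.
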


The mean value inequality is one part of the Moser iteration (see also \cite{CoG}).
\begin{lemma}[Mean value inequality on graphs]\label{MVIOG}
Let $G$ be a semiplanar graph with $\Sec G\geq0$. Then there exist two constants $C_1(D)$ and $C_2(D)$ such that for any $R>0,p\in G,$ any harmonic function $f$ on $B_{C_1R}^G(p),$ we
have
\begin{equation}\label{MVI}f^2(p)\leq\frac{C_2}{|B_{C_1R}^G(p)|}\sum_{x\in B_{C_1R}^G(p)}f^2(x)d_x.\end{equation}
\end{lemma}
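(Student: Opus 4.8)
The plan is to obtain \eqref{MVI} by the standard Moser iteration scheme, which is available here precisely because we have the volume doubling property \eqref{GVD1} and the Poincar\'e inequality \eqref{PII1} with constants depending only on $D$. The first and most elementary step is to convert harmonicity of $f$ into subharmonicity of $f^2$. Using the discrete product rule together with $Lf\equiv 0$, one computes for every $x$ in the domain
\begin{equation*}
L(f^2)(x)=\frac{1}{d_x}\sum_{y\sim x}\bigl(f^2(y)-f^2(x)\bigr)=\frac{1}{d_x}\sum_{y\sim x}\bigl(f(y)-f(x)\bigr)^2\geq 0,
\end{equation*}
so that $u:=f^2$ is a nonnegative subharmonic function on $B_{C_1R}^G(p)$. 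All subsequent estimates will be applied to powers $u^{\beta}$ of this subharmonic function.

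Next I would derive a local Sobolev inequality on $G$ from \eqref{GVD1} and \eqref{PII1}. On a graph with bounded geometry (here $3\le d_x\le 6$), the combination of volume doubling and an $L^2$ Poincar\'e inequality yields, by the now-classical argument of Saloff-Coste and Delmotte type (see \cite{D2,HS}), a Sobolev inequality with some exponent $\nu=\nu(D)>2$: for a suitable enlargement of the ball,
\begin{equation*}
\left(\frac{1}{|B_R^G(p)|}\sum_{x\in B_R^G(p)}|v(x)|^{\frac{2\nu}{\nu-2}}d_x\right)^{\frac{\nu-2}{\nu}}\leq \frac{C(D)R^2}{|B_R^G(p)|}\sum_{x,y\in B_{CR}^G(p);\,x\sim y}\bigl(v(x)-v(y)\bigr)^2+\frac{C(D)}{|B_R^G(p)|}\sum_{x\in B_R^G(p)}v^2(x)d_x .
\end{equation*}
The radius mismatch between the two sides of \eqref{PII1} is handled by a covering argument, where the doubling property is used repeatedly to keep all constants dependent only on $D$.

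The heart of the iteration is then a Caccioppoli (energy) estimate for the subharmonic function: testing the inequality $L(u^{\beta})\geq 0$ against $\phi^2 u^{\beta}$ for a cutoff $\phi$ supported in $B_{r'}^G(p)$ and equal to $1$ on $B_{r}^G(p)$ (with $r<r'$), and using $|\nabla\phi|\lesssim (r'-r)^{-1}$, bounds the discrete Dirichlet energy of $\phi u^{\beta}$ on the inner ball by $C(\beta)(r'-r)^{-2}$ times the $L^2$-mass of $u^{\beta}$ on the outer ball. Inserting this into the Sobolev inequality produces the self-improving reverse inequality that raises the integrability exponent by the fixed factor $\chi:=\tfrac{\nu}{\nu-2}>1$ at the cost of the geometric factors. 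I would then iterate over a sequence of radii $r_k$ decreasing from $C_1R$ down to, say, $\tfrac12 C_1R$, with exponents $p_k=\chi^{k}$; since $\sum_k \chi^{-k}$ and the corresponding radius increments converge, the accumulated constants remain bounded by a single $C_2(D)$, and the limit yields $\sup_{B_{C_1R/2}^G(p)}u\leq \frac{C_2}{|B_{C_1R}^G(p)|}\sum_{x\in B_{C_1R}^G(p)}u(x)d_x$, which gives \eqref{MVI} upon evaluating at $p$ and recalling $u=f^2$.

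I expect the main obstacle to be the two geometric bookkeeping issues rather than the algebra: first, establishing the discrete Sobolev inequality and, in particular, absorbing the radius gap in the Poincar\'e inequality \eqref{PII1} through a chaining/covering scheme while certifying that every constant depends only on $D$; and second, controlling the growth of the constant $C(\beta)$ in the Caccioppoli step across the infinitely many iteration stages so that the product telescopes to a finite $C_2(D)$. Both are standard in the continuous setting, but the discrete cutoff functions and the bounded-overlap covering require care to carry out cleanly on $G$.
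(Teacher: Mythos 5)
Your proposal is correct and takes essentially the same route as the paper: the paper gives no proof of this lemma at all, but simply observes that the volume doubling property \eqref{GVD1} and the Poincar\'e inequality \eqref{PII1} allow the standard Moser iteration to be carried out on $G$, citing \cite{D2,HS,CoG}, and your outline (subharmonicity of $f^2$ via the identity $L(f^2)(x)=\frac{1}{d_x}\sum_{y\sim x}(f(y)-f(x))^2$ when $Lf=0$, a discrete Sobolev inequality derived from doubling plus Poincar\'e, a Caccioppoli estimate with discrete cutoffs, and iteration over shrinking radii with constants depending only on $D$) is precisely that cited argument. The technical caveats you flag (the radius gap in \eqref{PII1}, the absence of a discrete Leibniz rule in the Caccioppoli step, and the fact that the iteration cannot descend below scale $1$, which is harmless since \eqref{MVI} is trivial when $C_1R<1$) are exactly the points handled in the references, so nothing is missing.
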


In the following process, we extend each function defined on $G$ to the function $\bar{f}$ defined on $X:=S(G)$ with controlled behavior. Let $f$ be a function on $G$, $f:G\rightarrow \mathds{R},$ $G_1$ be the 1-dimensional simplicial complex of $G$ by assigning each edge the length one. Step one is the linear interpolation, i.e. $f$ is extended to a piecewise linear function on $G_1,$ $f_1:G_1\rightarrow \mathds{R}.$ In step two, we extend $f_1$ to a function defined on each face of $G.$ For any regular $n$-polygon $\triangle_n$ of side length one, there is a bi-Lipschitz map $$L_n:\triangle_n\rightarrow B_{r_n},$$ where $B_{r_n}$ is the circumscribed circle of $\triangle_n$ of radius $r_n=\frac{1}{2\sin\frac{\alpha_n}{2}}$ (for $\alpha_n=\frac{2\pi}{n}$). Without loss of generality, we may assume that the origin $\underline{o}=(0,0)$ of $\mathds{R}^2$ is the barycenter of $\triangle_n,$ the point $(x,y)=(r_n,0)\in \mathds{R}^2$ is a vertex of $\triangle_n,$ and $B_{r_n}=B_!
 {r_n}(\underline{o}).$  Then in  polar coordinates, $L_n$ reads
$$L_n:\triangle_n \ni (r,\theta)\mapsto (\rho, \eta)\in B_{r_n}(\underline{o}),$$ where for $\theta\in[j\alpha_n,(j+1)\alpha_n],\ j=0,1,\cdots,n-1,$
$$\left\{
\begin{array}{ll}
\rho=\frac{r\cos\big(\theta-(2j+1)\frac{\alpha_n}{2}\big)}{\cos\frac{\alpha_n}{2}}&\\
\eta=\theta&
 \end{array}\right..$$ It maps the boundary of $\triangle_n$ to the boundary of $B_{r_n}(\underline{o})$. Direct calculation shows that $L_n$ is a
 bi-Lipschitz map, i.e. for any $x,y \in \triangle_n$ we have $C_1|x-y|\leq|L_nx-L_ny|\leq C_2|x-y|,$ where $C_1$ and $C_2$ do not depend on $n.$
Then for any $\sigma\in F,$ we denote $\sigma:=\triangle_n$ where $n:=\deg(\sigma).$ Let $g:B_{r_n}(\underline{o})\rightarrow \mathds{R}$ satisfy the following boundary value problem
\begin{equation}\label{DFF1}\left\{
\begin{array}{ll}
\Delta g=0,&in\ \mathring{B}_{r_n}(\underline{o})\\
g|_{\partial B_{r_n}(\underline{o})}=f_1\circ L_n^{-1}&
\end{array}\right.,\end{equation} where $\mathring{B}_{r_n}(\underline{o})$ is the open disk.
Then we define $\bar{f}:X\rightarrow \mathds{R}$ as
\begin{equation}\label{DFF2}\bar{f}|_{\sigma}=g\circ L_n,\end{equation} for any $\sigma\in F.$
It is easy to see that $\bar f$ is continuous function (actually it is in $W^{1,2}_{loc}(X)$).

We improve the estimates in \cite{HJL} to control the behavior of $\bar{f}$. Let $B_1$ be the closed unit disk in $\mathds{R}^2.$ For completeness, we give the proof here.

\begin{lemma}\label{B1ECL} For any Lipschitz function $h:\partial B_1\rightarrow \mathds{R},$ let $g:B_1\rightarrow\mathds{R}$ satisfy the following boundary value problem
$$\left\{
\begin{array}{ll}
\Delta g=0,&in\ \mathring{B}_1\\
g|_{\partial B_1}=h&
\end{array}\right..$$ Then we have
$$\int_{B_1}|\nabla g|^2\leq\int_{\partial B_1}h_{\theta}^2,$$
$$\int_{\partial B_1}h^2\leq C(\epsilon)\int_{B_1}g^2+\epsilon\int_{\partial B_1}h_{\theta}^2,$$ where $h_{\theta}=\frac{\partial h}{\partial \theta},$ $\epsilon$ is small.
\end{lemma}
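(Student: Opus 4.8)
The plan is to reduce everything to a Fourier series computation on the circle. I would write the boundary data as $h(\theta)=\frac{a_0}{2}+\sum_{n\geq 1}(a_n\cos n\theta+b_n\sin n\theta)$; since $h$ is Lipschitz it lies in $H^1(\partial B_1)$, so this series converges in $H^1$ and $h_\theta=\sum_{n\geq 1}n(-a_n\sin n\theta+b_n\cos n\theta)\in L^2$. The unique harmonic extension is the Poisson integral, which in polar coordinates is the power series $g(r,\theta)=\frac{a_0}{2}+\sum_{n\geq 1}r^n(a_n\cos n\theta+b_n\sin n\theta)$. All three integrals in the statement then become explicit series in the coefficients $(a_n,b_n)$ by the orthogonality of $\{\cos n\theta,\sin n\theta\}$ on $[0,2\pi]$.

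First I would compute the Dirichlet energy. Since $g$ is harmonic, Green's identity gives $\int_{B_1}|\nabla g|^2=\int_{\partial B_1}g\,\partial_r g$, and evaluating the series at $r=1$ yields $\int_{B_1}|\nabla g|^2=\pi\sum_{n\geq 1}n(a_n^2+b_n^2)$, whereas $\int_{\partial B_1}h_\theta^2=\pi\sum_{n\geq 1}n^2(a_n^2+b_n^2)$. The first inequality is then immediate from the elementary bound $n\leq n^2$ for $n\geq 1$.

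The second inequality is the substantive one, and the key idea is a splitting of the Fourier modes at a threshold $N$ to be chosen in terms of $\epsilon$. Direct computation gives $\int_{\partial B_1}h^2=\frac{\pi a_0^2}{2}+\pi\sum_{n\geq1}(a_n^2+b_n^2)$ and, integrating the power series for $g^2$ against $r\,dr\,d\theta$, $\int_{B_1}g^2=\frac{\pi a_0^2}{4}+\frac{\pi}{2}\sum_{n\geq 1}\frac{a_n^2+b_n^2}{n+1}$. For the high modes $n>N$ I would absorb the contribution into the gradient term: choosing $N\geq \epsilon^{-1/2}$ ensures $1\leq \epsilon n^2$ for $n>N$, whence $\pi\sum_{n>N}(a_n^2+b_n^2)\leq \epsilon\pi\sum_{n>N}n^2(a_n^2+b_n^2)\leq \epsilon\int_{\partial B_1}h_\theta^2$. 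For the low modes $n\leq N$ the weight $\frac{1}{n+1}$ in the interior integral is bounded below by $\frac{1}{N+1}$, so $\int_{B_1}g^2$ controls $\sum_{n\leq N}(a_n^2+b_n^2)$ and $a_0^2$ with a factor comparable to $N$; explicitly, $\frac{\pi a_0^2}{2}\leq 2\int_{B_1}g^2$ and $\pi\sum_{n\leq N}(a_n^2+b_n^2)\leq 2(N+1)\int_{B_1}g^2$, giving $\frac{\pi a_0^2}{2}+\pi\sum_{n\leq N}(a_n^2+b_n^2)\leq (2N+4)\int_{B_1}g^2$. Adding the two estimates yields the claim with $C(\epsilon)=2N+4$ and $N=\lceil \epsilon^{-1/2}\rceil$.

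The computation is elementary, so the only points requiring care are the bookkeeping of constants in the low-mode estimate (handling the $a_0$ term and the $n+1$ weight correctly) and the justification that the series may be integrated and differentiated term by term. The latter is guaranteed by $h\in H^1(\partial B_1)$, which makes all the series involved ($h$, $h_\theta$, $g$, $\nabla g$) converge in the relevant $L^2$ spaces. I do not expect any genuine obstacle beyond tracking the dependence of $C(\epsilon)$ on $\epsilon$, which comes out as $C(\epsilon)\sim \epsilon^{-1/2}$; this behavior is exactly what the application in Theorem~\ref{MVIIL} will tolerate.
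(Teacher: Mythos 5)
Your proof is correct, and while the first inequality follows exactly the paper's route (Parseval on the circle, Green's identity $\int_{B_1}|\nabla g|^2=\int_{\partial B_1}g\,g_r=\pi\sum_{n\geq1}n(a_n^2+b_n^2)$, and $n\leq n^2$), your argument for the second inequality is genuinely different. The paper applies the divergence theorem to the vector field $g^2x$, writing $\int_{\partial B_1}h^2=\int_{B_1}\nabla\cdot(g^2x)=2\int_{B_1}g^2+2\int_{B_1}g\,\nabla g\cdot x$, and then uses $|x|\leq1$, Cauchy--Schwarz and Young's inequality together with the first inequality to absorb the gradient term, which produces $C(\epsilon)\sim\epsilon^{-1}$. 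You instead split the Fourier spectrum at a threshold $N\sim\epsilon^{-1/2}$: high modes are absorbed into $\epsilon\int_{\partial B_1}h_\theta^2$ via $1\leq\epsilon n^2$ for $n>N$, and low modes are controlled by the interior integral through the explicit identity $\int_{B_1}g^2=\frac{\pi a_0^2}{4}+\frac{\pi}{2}\sum_{n\geq1}\frac{a_n^2+b_n^2}{n+1}$, whose weights are bounded below by $\frac{1}{N+1}$ on the low range. Your bookkeeping checks out, and the term-by-term manipulations are justified since Lipschitz $h$ lies in $H^1(\partial B_1)$. What each approach buys: yours stays entirely within Parseval-type identities, makes all constants explicit, and gives the sharper dependence $C(\epsilon)\sim\epsilon^{-1/2}$ rather than $\epsilon^{-1}$; the paper's divergence-theorem trick avoids computing $\int_{B_1}g^2$ as a series and so transfers more readily to settings where an explicit eigenfunction expansion is unavailable. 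For the actual application (Lemma \ref{EX1}, where $\epsilon=\frac{1}{24C(D)}$ is a fixed constant depending only on $D$) either dependence of $C(\epsilon)$ on $\epsilon$ is perfectly adequate, so the improvement is cosmetic here, but your version is a valid and self-contained alternative proof.
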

\begin{proof}
Let $\frac{1}{\sqrt{2\pi}}, \frac{\sin n\theta}{\sqrt{\pi}}, \frac{\cos n\theta}{\sqrt{\pi}}$ (for $n=1,2,\cdots$) be the orthonormal basis of
$L^2(\partial B_1).$ Then $h:\partial B_1\rightarrow \mathds{R}$ can be represented in $L^2(\partial B_1)$ by
$$h(\theta)=a_0\frac{1}{\sqrt{2\pi}}+\sum_{i=1}^{\infty}\left( a_n\frac{\cos n\theta}{\sqrt{\pi}}+b_n\frac{\sin n\theta}{\sqrt{\pi}}\right).$$
So the harmonic function $g$ with boundary value $h$ is
$$g(r,\theta)=a_0\frac{1}{\sqrt{2\pi}}+\sum_{i=1}^{\infty}\left( a_nr^n\frac{\cos n\theta}{\sqrt{\pi}}+b_nr^n\frac{\sin n\theta}{\sqrt{\pi}}\right).$$

Since $\Delta g=0,$ we have $\Delta g^2=2|\nabla g|^2,$ then
$$\int_{B_1}|\nabla g|^2=\frac{1}{2}\int_{B_1}\Delta g^2=\frac{1}{2}\int_{\partial B_1}\frac{\partial g^2}{\partial r},$$ which follows from integration by parts.
So that $$\int_{B_1}|\nabla g|^2=\int_{\partial B_1}g g_r=\sum_{n=1}^{\infty}n(a_n^2+b_n^2).$$
In addition, $$\int_{\partial B_1}h_{\theta}^2=\sum_{n=1}^{\infty}n^2(a_n^2+b_n^2). $$
Hence, \begin{equation}\label{B1EC}\int_{B_1}|\nabla g|^2\leq\int_{\partial B_1}h_{\theta}^2.\end{equation}

The second part of the theorem follows from an integration by parts and the H\"older inequality.
\begin{eqnarray*}
\int_{\partial B_1}h^2&=&\int_{\partial B_1}(h^2 x)\cdot x=\int_{B_1}\nabla\cdot(g^2x)\\
&=&2\int_{B_1}g^2+2\int_{B_1}g\nabla g\cdot x\\
&\leq&2\int_{B_1}g^2+2(\int_{B_1}g^2)^{\frac{1}{2}}(\int_{B_1}|\nabla g|^2)^{\frac{1}{2}}\ \ \  (by\ |x|\leq1)\\
&\leq&C(\epsilon)\int_{B_1}g^2+\epsilon\int_{B_1}|\nabla g|^2\\
&\leq&C(\epsilon)\int_{B_1}g^2+\epsilon\int_{\partial B_1}h_{\theta}^2.\ \ \ \ \ \ \ \ \ \  \  \ \ \ \ \  \ \ \ \ \ \ \ \ (by\ (\ref{B1EC}))
\end{eqnarray*}
\end{proof}

Note that for the semiplanar graph $G$ with nonnegative curvature and any face $\sigma=\triangle_n$ of $G$, we have $3\leq n\leq D,$ $\frac{1}{\sqrt3}
\leq r_n=\frac{1}{\sin\frac{\pi}{n}}\leq \frac{1}{2\sin\frac{\pi}{D}}=C(D).$ Then the scaled version of Lemma \ref{B1ECL} reads
\begin{lemma} For $3\leq n\leq D$, and any Lipschitz function $h:\partial B_{r_n}\rightarrow \mathds{R},$ we denote by $g$ the harmonic function satisfying the Dirichlet boundary value problem
$$\left\{
\begin{array}{ll}
\Delta g=0,&in\ \mathring{B}_{r_n}\\
g|_{\partial B_{r_n}}=h&
\end{array}\right..$$ Then it holds that
\begin{equation}\label{ENGC1}\int_{\partial B_{r_n}}h^2\leq C(D,\epsilon)\int_{B_{r_n}}g^2+C(D)\epsilon\int_{\partial B_{r_n}}h_T^2,\end{equation} where $\epsilon$ is small, $T=\frac{1}{r_n}\partial_{\theta}$ is the unit tangent vector on the boundary $\partial B_{r_n}$ and $h_T$ is the directional derivative of $h$ in $T.$
\end{lemma}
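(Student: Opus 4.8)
The plan is to deduce the scaled estimate \eqref{ENGC1} directly from the unit-disk case in Lemma \ref{B1ECL} by a dilation, since $B_{r_n}$ and $B_1$ differ only by the scaling $x\mapsto r_nx$, and both harmonicity and the Dirichlet problem behave well under dilations. Concretely, I would introduce $\Phi:B_1\to B_{r_n}$, $\Phi(x)=r_nx$, set $\tilde h:=h\circ\Phi|_{\partial B_1}:\partial B_1\to\mathds{R}$, and let $\tilde g$ be the harmonic extension of $\tilde h$ on $B_1$. Since $\Delta(\tilde g\circ\Phi^{-1})=r_n^{-2}(\Delta\tilde g)\circ\Phi^{-1}=0$, the function $g:=\tilde g\circ\Phi^{-1}$ is harmonic on $\mathring B_{r_n}$ with boundary value $h$; by uniqueness of the solution of the Dirichlet problem it is exactly the function $g$ appearing in \eqref{ENGC1}.

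The only real work is bookkeeping of the dilation factors, using $ds_{\partial B_{r_n}}=r_n\,d\theta$ and $dA_{B_{r_n}}=r_n^2\,dA_{B_1}$. A short computation gives
\begin{equation}
\int_{\partial B_{r_n}}h^2=r_n\int_{\partial B_1}\tilde h^2,\qquad \int_{B_{r_n}}g^2=r_n^2\int_{B_1}\tilde g^2,
\end{equation}
while, because $T=\frac{1}{r_n}\partial_\theta$ is the \emph{unit} tangent and $\tilde h(\theta)=h(r_n,\theta)$ as functions of the angle, one has $h_T=\frac{1}{r_n}\tilde h_\theta$ pointwise, hence
\begin{equation}
\int_{\partial B_{r_n}}h_T^2=\frac{1}{r_n}\int_{\partial B_1}\tilde h_\theta^2.
\end{equation}

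Next I would apply the second inequality of Lemma \ref{B1ECL} to the pair $(\tilde h,\tilde g)$, namely $\int_{\partial B_1}\tilde h^2\le C(\epsilon)\int_{B_1}\tilde g^2+\epsilon\int_{\partial B_1}\tilde h_\theta^2$, and multiply through by $r_n$. Substituting the three identities above turns this into
\begin{equation}
\int_{\partial B_{r_n}}h^2\le\frac{C(\epsilon)}{r_n}\int_{B_{r_n}}g^2+r_n^2\,\epsilon\int_{\partial B_{r_n}}h_T^2 .
\end{equation}
Finally, recalling that for $3\le n\le D$ one has $\frac{1}{\sqrt3}\le r_n=\frac{1}{2\sin(\pi/n)}\le\frac{1}{2\sin(\pi/D)}=C(D)$, the factor $1/r_n$ in the first term is bounded by $\sqrt3$ and the factor $r_n^2$ in the second by $C(D)^2$; absorbing these into the constants (and relabelling $C(D)^2\epsilon$ as $C(D)\epsilon$) yields exactly \eqref{ENGC1}.

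I do not expect a genuine obstacle here, as the argument is pure scaling. The only point requiring care is keeping the different powers of $r_n$ straight, so that the dilation does not corrupt the $\epsilon$-small coefficient of the tangential term; this is guaranteed precisely by the two-sided bound $\frac{1}{\sqrt3}\le r_n\le C(D)$ coming from $3\le n\le D$, which is why the resulting constants depend only on $D$ and $\epsilon$.
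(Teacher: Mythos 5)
Your proposal is correct and is precisely the argument the paper intends: the paper offers no separate proof, simply declaring the lemma to be ``the scaled version of Lemma \ref{B1ECL},'' and your dilation $\Phi(x)=r_nx$ with the bookkeeping $\int_{\partial B_{r_n}}h^2=r_n\int_{\partial B_1}\tilde h^2$, $\int_{B_{r_n}}g^2=r_n^2\int_{B_1}\tilde g^2$, $\int_{\partial B_{r_n}}h_T^2=\frac{1}{r_n}\int_{\partial B_1}\tilde h_\theta^2$ spells out exactly that scaling. Your use of the two-sided bound $\frac{1}{\sqrt3}\leq r_n\leq C(D)$ to absorb the factors $1/r_n$ and $r_n^2$ into $C(D,\epsilon)$ and $C(D)\epsilon$ is also exactly where the $D$-dependence of the constants in \eqref{ENGC1} comes from.
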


The following lemma follows from the bi-Lipschitz property of the map $L_n:\triangle_n\rightarrow B_{r_n}.$
\begin{lemma}\label{EX1} Let $G$ be a semiplanar graph with $\Sec G\geq 0$ and $\sigma:=\triangle_n.$ Then we have
\begin{equation}\label{CONTROL}\sum_{y\in \partial \triangle_n\cap G}f^2(y)\leq C\int_{\partial \triangle_n}f_1^2\leq C(D)\int_{\triangle_n}\bar{f}^2.\end{equation}
\end{lemma}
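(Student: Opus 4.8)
The plan is to establish the two inequalities in \eqref{CONTROL} separately. The left-hand inequality is elementary and uses only the piecewise-linear structure of $f_1$; the right-hand inequality is the substantive one and combines the bi-Lipschitz transfer through $L_n$ with the boundary estimate \eqref{ENGC1}, closed off by an absorption argument.

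For the first inequality I would work edge by edge along the boundary cycle $\partial\triangle_n$, which consists of $n$ unit edges. On an edge with endpoints $y_1,y_2\in G$ the interpolant $f_1$ is affine, so parametrizing by $t\in[0,1]$ gives $\int_0^1 f_1^2=\tfrac13\big(f^2(y_1)+f(y_1)f(y_2)+f^2(y_2)\big)\geq\tfrac16\big(f^2(y_1)+f^2(y_2)\big)$. Summing over all $n$ edges and using that each vertex of $\triangle_n$ is shared by exactly two edges yields $\int_{\partial\triangle_n}f_1^2\geq\tfrac13\sum_{y\in\partial\triangle_n\cap G}f^2(y)$, which is the claimed bound with an absolute constant.

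For the second inequality I would first transfer all the quantities to the circumscribed disk via the boundary-to-boundary bi-Lipschitz map $L_n$, whose constants are independent of $n$. Writing $h=f_1\circ L_n^{-1}$ and comparing arclength and area measures under $L_n$ gives, with absolute constants, $\int_{\partial\triangle_n}f_1^2\leq C\int_{\partial B_{r_n}}h^2$, and $\int_{B_{r_n}}g^2\leq C\int_{\triangle_n}\bar f^2$ (recall $\bar f|_\sigma=g\circ L_n$), and $\int_{\partial B_{r_n}}h_T^2\leq C\int_{\partial\triangle_n}(f_1)_s^2$, where $(f_1)_s$ is the arclength derivative of $f_1$ along $\partial\triangle_n$. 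Feeding these into \eqref{ENGC1} produces
$$\int_{\partial\triangle_n}f_1^2\leq C(D,\epsilon)\int_{\triangle_n}\bar f^2+C(D)\epsilon\int_{\partial\triangle_n}(f_1)_s^2.$$
The tangential-energy term is controlled combinatorially: since $f_1$ is affine on each unit edge, $\int_{\partial\triangle_n}(f_1)_s^2=\sum_{\mathrm{edges}}\big(f(y_2)-f(y_1)\big)^2\leq 2\sum_{\mathrm{edges}}\big(f^2(y_1)+f^2(y_2)\big)=4\sum_{y}f^2(y)$, and by the first inequality this is at most $C\int_{\partial\triangle_n}f_1^2$. Substituting and then choosing $\epsilon=\epsilon(D)$ small enough that the combined coefficient $C(D)\epsilon\,C\leq\tfrac12$ lets me absorb the boundary term into the left-hand side, leaving $\int_{\partial\triangle_n}f_1^2\leq C(D)\int_{\triangle_n}\bar f^2$.

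The one point requiring care is the bookkeeping of the transfer inequalities under $L_n$ in the correct directions: that the $L^2$ norm of $h$ on the circle dominates that of $f_1$ on the polygon, that $\int_{B_{r_n}}g^2$ is dominated by $\int_{\triangle_n}\bar f^2$ rather than the reverse, and that the Jacobian and boundary speeds of $L_n$ remain pinched between two $n$-independent constants (guaranteed by the stated bi-Lipschitz bounds, with $r_n$ confined to a fixed interval for $3\leq n\leq D$). Once these are arranged the absorption step is routine, and it is precisely the $D$-dependent choice of $\epsilon$ that makes the final constant depend on $D$.
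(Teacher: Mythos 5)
Your proposal is correct and follows essentially the same route as the paper: the scaled boundary estimate \eqref{ENGC1} transferred through the bi-Lipschitz map $L_n$, the explicit edge-by-edge computations $\int_e f_1^2=\tfrac13\bigl(f^2(u)+f(u)f(v)+f^2(v)\bigr)$ and $\int_e (f_1)_{T_n}^2=(f(u)-f(v))^2$, and absorption of the tangential term by a $D$-dependent choice of $\epsilon$ (the paper takes $\epsilon=\tfrac{1}{24C(D)}$, matching your factor $12$). Your extra care about the directions of the transfer inequalities under $L_n$ is implicit in the paper's one-line appeal to the bi-Lipschitz property, but it is the same argument.
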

\begin{proof} By the bi-Lipschitz property of $L_n$ and the inequality (\ref{ENGC1}), we have
\begin{equation}\label{ENGC2}\int_{\partial \triangle_n}f_1^2\leq C(D,\epsilon)\int_{\triangle_n}\bar{f}^2+C(D)\epsilon\int_{\partial \triangle_n}(f_1)_{T_n}^2,\end{equation} where $T_n$ is the unit tangent vector on the boundary $\partial \triangle_n.$
Let $e\subset\triangle_n$ be an edge with two incident vertices, $u$ and $v$. By linear interpolation, we have
$$\int_ef_1^2=\int_0^1(tf(u)+(1-t)f(v))^2dt=\frac{1}{3}(f(u)^2+f(u)f(v)+f(v)^2),$$ hence
\begin{equation}\label{ECTG1}\frac{1}{6}(f(u)^2+f(v)^2)\leq\int_ef_1^2\leq\frac{1}{2}(f(u)^2+f(v)^2).\end{equation} In addition,
\begin{equation}\label{ECTG2}\int_e(f_1)_{T_n}^2=(f(u)-f(v))^2\leq2(f(u)^2+f(v)^2).\end{equation}
Hence, by (\ref{ENGC2}) (\ref{ECTG1}) and (\ref{ECTG2}), we have
\begin{equation}\label{ECTG3}\int_{\partial \triangle_n}f_1^2\leq C(D,\epsilon)\int_{\triangle_n}\bar{f}^2+12C(D)\epsilon\int_{\partial \triangle_n}f_1^2.\end{equation} By setting $\epsilon=\frac{1}{24C(D)},$ (\ref{ECTG1}) and (\ref{ECTG3}) implies that
$$\sum_{y\in \partial \triangle_n\cap G}f^2(y)\leq C\int_{\partial \triangle_n}f_1^2\leq C(D)\int_{\triangle_n}\bar{f}^2.$$
\end{proof}

Let $G=(V,E,F)$ be a semiplanar graph with $\Sec G\geq0.$ For any $p\in X,$ there exists a face $\sigma\in F$ such that $p\in\sigma.$ For any vertex $q\in\sigma\cap G,$ we have $d(p,q)\leq C_3(D),$ since $\diam\sigma\leq C_3(D)$ for $\deg(\sigma)\leq D.$ Note that $3\leq d_x\leq6,$ for any $x\in G.$
\begin{lemma}\label{Lem1} Let $G$ be a semiplanar graph with $\Sec G\geq0.$ Then there exists a constant C(D) such that for any $p\in X,$ $q\in G$ on same face, we have
\begin{equation}\label{Lem11}|B_{r'}(p)|\leq C(D)|B_r^G(q)|,\end{equation}
where $r>\frac{2C_3(D)}{C},$ $r'=Cr-2C_3(D),$ and $C$ is the constant in (\ref{LEM2}).
\end{lemma}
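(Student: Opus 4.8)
The plan is to bound the continuous volume $|B_{r'}(p)|$ from above by counting the faces of $G$ that meet the ball $B_{r'}(p)$, and then to identify the vertices of those faces with points of the discrete ball $B_r^G(q)$. Since the faces tile $X$ and a regular polygon of side length one with at most $D$ sides has area bounded by a constant $C(D)$, the first reduction reads
$$|B_{r'}(p)|\leq\sum_{\sigma\cap B_{r'}(p)\neq\emptyset}H^2(\sigma)\leq C(D)\,\sharp\{\sigma\in F:\sigma\cap B_{r'}(p)\neq\emptyset\}.$$

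First I would show that every face $\sigma$ meeting $B_{r'}(p)$ has all of its vertices inside $B_r^G(q)$. Pick $x\in\sigma\cap B_{r'}(p)$ and let $v$ be any vertex of $\sigma$. Since $\diam\sigma\leq C_3(D)$ we have $d(x,v)\leq C_3(D)$, and since $p$ and $q$ lie on a common face we likewise have $d(p,q)\leq C_3(D)$. The triangle inequality then yields
$$d(q,v)\leq d(q,p)+d(p,x)+d(x,v)\leq C_3(D)+r'+C_3(D)=r'+2C_3(D).$$
Feeding this into the bi-Lipschitz estimate (\ref{LEM2}) gives $d^G(q,v)\leq C^{-1}d(q,v)\leq C^{-1}(r'+2C_3(D))=r$ by the very choice $r'=Cr-2C_3(D)$; hence $v\in B_r^G(q)$.

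The last step is a combinatorial incidence count. Writing $\mathcal{F}:=\{\sigma\in F:\sigma\cap B_{r'}(p)\neq\emptyset\}$, I count pairs consisting of a face in $\mathcal{F}$ together with an incident vertex. Each face contributes $\deg(\sigma)\geq 3$ such pairs, while each vertex $v$ is incident to exactly $d_v$ faces and hence to at most $d_v$ faces of $\mathcal{F}$; moreover, by the previous paragraph the only vertices arising are those in $B_r^G(q)$. Therefore
$$3\,\sharp\mathcal{F}\leq\sum_{\sigma\in\mathcal{F}}\deg(\sigma)=\sum_{v\in B_r^G(q)}\sharp\{\sigma\in\mathcal{F}:v\in\sigma\}\leq\sum_{v\in B_r^G(q)}d_v=|B_r^G(q)|,$$
so $\sharp\mathcal{F}\leq\frac{1}{3}|B_r^G(q)|$. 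Combining this with the area reduction above gives $|B_{r'}(p)|\leq C(D)|B_r^G(q)|$, as claimed.

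The routine work is the area bound for a regular $D$-gon and the observation that the faces cover $X$ up to a set of measure zero (so that volume subadditivity applies). The only place demanding care is the constant-chasing in the first step, where the diameter bound and the bi-Lipschitz constant must combine exactly so that the enlarged radius $r'+2C_3(D)$ is rescaled by $C^{-1}$ to precisely $r$. I expect this bookkeeping of constants, rather than any conceptual difficulty, to be the main thing to get right; the incidence count in the final step is where the bounded geometry of $G$ (each vertex lying in at most $d_v$ faces, each face having at least three vertices) does the essential work of converting face counts into the discrete volume.
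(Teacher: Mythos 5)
Your proof is correct and follows essentially the same route as the paper's: you cover $B_{r'}(p)$ by the faces meeting it, use the diameter bound $\diam\sigma\leq C_3(D)$ together with the bi-Lipschitz estimate (\ref{LEM2}) to place every vertex of those faces in $B_r^G(q)$ (with exactly the same constant bookkeeping $r'+2C_3(D)=Cr$), and finish with the same face--vertex incidence count $3\,\sharp\mathcal{F}\leq\sum_{\sigma\in\mathcal{F}}\deg(\sigma)\leq\sum_{v\in B_r^G(q)}d_v=|B_r^G(q)|$. The only cosmetic difference is that the paper routes the last step through the vertex count $\sharp(\overline{W_{r'}}\cap G)$ and the equivalence $3\sharp B_r^G\leq|B_r^G|\leq 6\sharp B_r^G$, while you sum the vertex degrees directly; this is the same argument.
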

\begin{proof} Let $r'=Cr-2C_3(D)>0,$ $p\in\sigma_0\in F$ and $q\in\sigma_0.$ We denote $W_{r'}:=\{\sigma\in F\mid \sigma\cap B_{r'}(p)\neq\emptyset\}$ and $\overline{W_{r'}}:=\bigcup_{\sigma\in W_{r'}}\sigma.$ It is obvious that $B_{r'}(p)\subset \overline{W_{r'}}.$ For any vertex $x\in \overline{W_{r'}}\cap G,$ there exists a face $\sigma_1\in W_{r'}$ such that $x\in \sigma_1,$ so that
\begin{eqnarray*}d(q,x)&\leq&d(p,x)+d(p,q)\\
&\leq&r'+\diam \sigma_1+\diam \sigma_0\leq r'+2C_3(D)\\
&=&C r
\end{eqnarray*}
Hence by (\ref{LEM2}) we have $d^G(q,x)\leq r$ which implies that \begin{equation}\label{VC33}\overline{W_{r'}}\cap G\subset B_r^G(q).\end{equation}
Since $3\leq\deg(\sigma)\leq D,$ $|\sigma|:=H^2(\sigma)\leq C(D).$ Then
\begin{equation}\label{VC11}|B_{r'}(p)|\leq |\overline{W_{r'}}|=\sum_{\sigma\in W_{r'}}|\sigma|\leq C(D)\sharp W_r,\end{equation} where $\sharp W_{r'}$ is the number of faces in $W_{r'}.$
Moreover,
\begin{equation}\label{VC22}3\sharp W_{r'}\leq \sum_{\sigma\in W_{r'}}\deg(\sigma)\leq \sum_{x\in \overline{W_{r'}}\cap G}d_x\leq 6\sharp (\overline{W_{r'}}\cap G),\end{equation} where $\sharp (\overline{W_{r'}}\cap G)$ is number of vertices in $\overline{W_{r'}}\cap G.$ Hence the lemma follows from (\ref{VC11}) (\ref{VC22}) and (\ref{VC33}),
$$|B_{r'}(p)|\leq C(D)\sharp W_{r'}\leq C(D)\sharp (\overline{W_{r'}}\cap G)\leq C(D)\sharp B_r^G(q)\leq C(D)|B_r^G(q)|.$$
\end{proof}

Now we can prove the mean value inequality for the extended function $\bar{f}$ defined on $X:=S(G)$ for some harmonic function $f$ on $G.$
\begin{proof}[Proof of Theorem \ref{MVIIL}]
For any $p\in X,$ there exists a face $\triangle_n$ such that $p\in\triangle_n.$ Then by the construction of $\bar f$ (see (\ref{DFF1})(\ref{DFF2})), there exists a vertex $q\in\partial \triangle_n\cap G$ such that
\begin{eqnarray}\label{meanv1}\bar{f}^2(p)&\leq&f^2(q)\nonumber\\
&\leq&\frac{C_2(D)}{|B_{C_1R}^G(q)|}\sum_{y\in B_{C_1R}^G(q)}f^2(y)d_y,
\end{eqnarray} where the last inequality follows from the mean value inequality (\ref{MVI}) for harmonic functions on the graph $G$.

By (\ref{Lem11}) in Lemma \ref{Lem1}, $$|B_{C_1R}^G(q)|\geq C(D)|B_{r'}(p)|,$$ where $r'=CC_1R-2C_3(D)\geq C(D)R,$ if $R\geq R_1(D).$ Hence
\begin{eqnarray}\label{meanv2}|B_{C_1R}^G(q)|&\geq& C|B_{CR}(p)|\nonumber\\
&\geq&C|B_{2C_1R}(p)|,
\end{eqnarray} the last inequality follows from the relative volume comparison (\ref{RVCA1}) on $X$.

Let $W_R:=\{\sigma\in F\mid\sigma\cap B_{C_1R}^G(q)\neq\emptyset\}$ and $\overline{W_R}:=\bigcup_{\sigma\in W_R}\sigma.$ For any $x\in \overline{W_R},$ there exist a face $\sigma_1\in W_R$ such that $x\in\sigma_1$ and a vertex $z\in B_{C_1R}^G(q)\cap \sigma_1.$ Then by (\ref{LEM2})
$$d(q,x)\leq d(q,z)+d(z,x)\leq d^G(q,z)+\diam \sigma_1\leq C_1R+C_3(D).$$
Hence $$\overline{W_R}\subset B_{C_1R+C_3(D)}(q)\subset B_{C_1R+2C_3(D)}(p)\subset B_{2C_1R}(p)$$ if $R\geq R_2(D).$
By (\ref{meanv1}) and (\ref{meanv2}), we obtain
\begin{eqnarray*}\bar{f}^2(p)&\leq&\frac{C_2}{|B_{2C_1R}(p)|}\sum_{y\in \overline{W_R}\cap G}f^2(y)\\
&\leq&\frac{C_2}{|B_{2C_1R}(p)|}\sum_{\sigma\in W_R}\sum_{y\in\partial \sigma\cap G}f^2(y)\\
&\leq&\frac{C_2}{|B_{2C_1R}(p)|}\sum_{\sigma\in W_R}\int_{\sigma}\bar{f}^2\\
&\leq&\frac{C_2}{|B_{2C_1R}(p)|}\int_{B_{2C_1R}(p)}\bar{f}^2,
\end{eqnarray*} if $R\geq R_2(D),$  where the last second inequality follows from (\ref{CONTROL}) in Lemma \ref{EX1}. Then the theorem follows by setting the new $R_1(D):=2C_1\max\{R_1(D), R_2(D)\}$.

\end{proof}

\section{Optimal Dimension Estimate}
In this section, we  estimate the dimension of the space of polynomial growth harmonic functions on a semiplanar graph with nonnegative combinatorial curvature.

Let $G$ be a semiplanar graph with $\Sec G\geq0.$ For some fixed $p\in G,$ we denote by $H^d(G):=\{u:G\rightarrow\mathds{R}\mid Lu=0, |u(x)|\leq C(d^G(p,x)+1)^d\}$ the space of polynomial growth harmonic functions on $G$ with growth rate less than or equal to $d.$ By the method of Colding-Minicozzi, the volume doubling property (\ref{GVD1}) and the Poincar\'e inequality (\ref{PII1})
imply that $\dim H^d(G)\leq C(D)d^{v(D)}$ for $d\geq1,$ where $C(D)$ and $v(D)$ are constants depending on the maximal facial degree $D$ of $G.$ Hua-Jost-Liu \cite{HJL}
used the weak relative volume comparison (\ref{GRV1}) on the graph $G$
and the Poincar\'e inequality to obtain the dimension estimate $\dim
H^d(G)\leq C d^2.$ But the optimal dimension estimate is linear in $d$
as in the Riemannian case (see \cite{CM3,CM4,L1}). On the graph $G,$
it is hard to obtain a nice relative volume comparison. But on the Alexandrov space $X:=S(G),$ the relative volume comparison (\ref{RVCA1}) follows from the Bishop-Gromov volume comparison theorem. To obtain the asymptotically optimal dimension estimate, we argue on the Alexandrov space $X$ instead of $G$.

We denote by $P^d(X):=\{u:X\rightarrow \mathds{R}\mid |u(x)|\leq C(d(p,x)+1)^d\}$ the space of polynomial growth functions on $X$ with growth rate less than or equal to $d.$  For any harmonic function on $G,$ we extend it to the function $\bar{f}$ defined on $X$ in the process of (\ref{DFF1}) and (\ref{DFF2}) which establishes a map
$$E:H^d(G)\rightarrow P^d(X),$$$$f\mapsto Ef=\bar{f}.$$ It is easy to see that $E$ is an injective linear operator. Hence it suffices to get the dimension estimate of the image $E(H^d(G)).$ By the relative volume comparison (\ref{RVCA1}) on $X$ and the mean value inequality (\ref{MVIOX}) for each function in $E(H^d(G)),$ we obtain the optimal dimension estimate (see \cite{L1,L2,CM4,Hu2}).

\begin{lemma}\label{innerprod} For any finite dimensional subspace $K\subset E(H^d(G)),$ there exists a constant $R_0(K)$ depending on $K$ such that for any $R\geq R_0(K),$
\begin{equation}\label{innerp1}A_R(u,v)=\int_{B_R(p)}uv\end{equation} is an inner product on $K.$
\end{lemma}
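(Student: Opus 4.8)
The plan is to verify the two nontrivial requirements for $A_R$ to be an inner product on $K$: symmetry/bilinearity, which is immediate, and positive-definiteness, which is the real content. Since $A_R(u,u)=\int_{B_R(p)}u^2\ge 0$ always, the form is automatically positive semidefinite, so the only danger is a nonzero $u\in K$ with $\int_{B_R(p)}u^2=0$. This is precisely where the failure of unique continuation flagged in the introduction would worry us for harmonic functions; the escape is that the elements of $K$ are the \emph{extended} functions $\bar f=Ef$ on $X$ from \eqref{ETM1}, which are genuine continuous functions, together with the injectivity of $E$.

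First I would record the elementary structural facts about the quadratic form $q_R(u):=A_R(u,u)=\int_{B_R(p)}u^2$. For each fixed $R$ it is finite, since every $u\in P^d(X)$ is continuous and hence bounded on the compact ball $B_R(p)$, and it depends continuously on $u\in K$: on the finite-dimensional space $K$ it is just a quadratic form in the coordinates relative to any fixed basis. Moreover $q_R$ is monotone nondecreasing in $R$, because $B_{R_1}(p)\subset B_{R_2}(p)$ for $R_1<R_2$ while the integrand $u^2$ is nonnegative.

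The key point is that a nonzero element of $K$ has strictly positive $L^2$-mass on sufficiently large balls, and this holds \emph{without} any unique continuation property. If $u\in K$ and $u\neq 0$, then since $E$ is injective and $E(H^d(G))$ consists of genuine functions on $X$, the element $u$ is a nonzero continuous function; hence $u(x_0)\neq 0$ for some $x_0\in X$, and by continuity $u^2$ is bounded below by a positive constant on an open neighborhood of $x_0$, which has positive $H^2$-measure. Consequently $q_R(u)>0$ as soon as $R$ is large enough that $B_R(p)$ contains that neighborhood. Thus for every $u\in K\setminus\{0\}$ there is a radius beyond which $q_R(u)>0$.

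Finally I would upgrade this per-function statement to a uniform one over $K$, which is where finite-dimensionality is essential. Fix any norm on $K$ and let $S\subset K$ be its (compact) unit sphere. For each $R$ the set $U_R:=\{u\in S:q_R(u)>0\}$ is open in $S$ by continuity of $q_R$, and the family $\{U_R\}_{R>0}$ is increasing in $R$ by monotonicity. By the previous paragraph $\bigcup_{R>0}U_R=S$, so $\{U_R\}$ is an open cover of the compact set $S$; extracting a finite subcover and invoking the increasing property, a single $U_{R_0}$ already equals $S$. Hence $q_{R_0}(u)>0$ for all $u\in S$, and by homogeneity $q_{R_0}(u)>0$ for all $u\in K\setminus\{0\}$, with the same holding for every $R\ge R_0$ by monotonicity; setting $R_0(K):=R_0$ makes $A_R$ positive-definite, and thus an inner product, on $K$ for all $R\ge R_0(K)$. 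The main (and essentially only) obstacle is this passage from the easy pointwise statement to uniformity over $K$, resolved by compactness of the unit sphere in the finite-dimensional space together with the monotonicity of $q_R$ in $R$; the role of the extension $E$ is exactly to trade the unavailable unique continuation for the triviality that a nonzero continuous function has positive $L^2$-norm over a large enough ball.
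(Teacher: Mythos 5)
Your proof is correct and is essentially the argument the paper relies on: the paper states this lemma without proof, deferring to a lemma of \cite{Hu2}, whose proof is exactly your combination of (i) a nonzero element of $K$ being a continuous, not-identically-zero function on $X$, hence having positive $L^2$-mass on all sufficiently large balls (no unique continuation needed), and (ii) compactness of the unit sphere of the finite-dimensional $K$ together with continuity of $q_R$ and monotonicity of $R\mapsto\int_{B_R(p)}u^2$ to make the radius $R_0(K)$ uniform. One cosmetic remark: the injectivity of $E$ plays no role at this step --- a nonzero vector of $K\subset P^d(X)$ is already a not-identically-zero function --- injectivity is only needed later to transfer the dimension bound from $E(H^d(G))$ back to $H^d(G)$.
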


\begin{lemma}\label{PNGL1} Let $G$ be a semiplanar graph with $\Sec G\geq0,$ $K$ be a $k$-dimensional subspace of $E(H^d(G)).$ Given $\beta>1,\delta>0,$ for any $R_1\geq R_0(K)$ there exists $R>R_1$ such that if $\{u_i\}_{i=1}^k$ is an orthonormal basis of $K$ with respect to the inner product $A_{\beta R},$ then $$\sum_{i=1}^k A_R(u_i,u_i)\geq k\beta^{-(2d+2+\delta)}.$$
\end{lemma}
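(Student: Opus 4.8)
The plan is to follow the now-standard Colding--Minicozzi volume-comparison argument, using the relative volume comparison \eqref{RVCA1} on $X$ together with the polynomial growth bound to show that the mass of any function in $K$ cannot be too concentrated in the annulus between radius $R$ and radius $\beta R$. The quantity I want to bound below is the ratio $\sum_i A_R(u_i,u_i)/\sum_i A_{\beta R}(u_i,u_i) = \sum_i A_R(u_i,u_i)$, since the $\{u_i\}$ are $A_{\beta R}$-orthonormal and hence $\sum_i A_{\beta R}(u_i,u_i)=k$. The key is that this ratio, being a basis-independent trace quantity, can be expressed as $\mathrm{tr}(Q)$ where $Q$ is the matrix of the form $A_R$ with respect to the $A_{\beta R}$-orthonormal basis; because $A_R\le A_{\beta R}$ we have $0\le Q\le I$, so $\mathrm{tr}(Q)$ depends only on the pair of subspaces and not on the choice of basis.

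First I would consider the function $F(s):=\int_{B_s(p)} w^2$ for a fixed $w\in K$, and examine how $F(\beta R)/F(R)$ behaves. The polynomial growth assumption $|w(x)|\le C(d(p,x)+1)^d$ controls $w^2$ pointwise by $(s+1)^{2d}$ on $B_s(p)$, while the relative volume comparison \eqref{RVCA1} gives $|B_{\beta R}(p)|\le \beta^2 |B_R(p)|$. Combining these, the mass added in the annulus $B_{\beta R}(p)\setminus B_R(p)$ is controlled by $(\beta R+1)^{2d}\,(\beta^2-1)|B_R(p)|$, so that for $R$ large (relative to $1$) one gets a bound of the shape $F(\beta R)\le \beta^{2d+2+\delta} F(R)$ on a suitably chosen radius. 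The point is that one cannot make this hold for every $R$ simultaneously, but by a pigeonhole / telescoping argument over the scales $R_1,\beta R_1,\beta^2 R_1,\dots$ one finds at least one scale $R>R_1$ at which the growth of $F$ is no worse than the average geometric growth rate, which is exactly $\beta^{2d+2+\delta}$ once the extra $\delta$ absorbs lower-order terms.

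Concretely, I would apply this telescoping simultaneously to the finite-dimensional family: define $V(s):=\det\big(A_s(u_i,u_j)\big)$ or, more simply, track $\prod_i$ or $\sum_i A_s(u_i,u_i)$ along the geometric sequence of radii. If no admissible $R$ existed, then at every scale $\beta^{m}R_1$ the form $A_{\beta^{m+1}R_1}$ would exceed $\beta^{2d+2+\delta}A_{\beta^m R_1}$ in the relevant trace/determinant sense, and iterating this over many scales would force the total mass $\sum_i A_{\beta^M R_1}(u_i,u_i)$ to grow faster than any polynomial of degree $2d+2$ in the radius, contradicting the polynomial growth bound combined with \eqref{RVCA1}. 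This contradiction produces the desired scale $R$, and at that scale the trace inequality $\sum_i A_R(u_i,u_i)\ge k\beta^{-(2d+2+\delta)}$ follows directly from the definition of $Q$ above together with the relation $\det Q\ge \beta^{-k(2d+2+\delta)}$ (whence, by the arithmetic–geometric mean inequality applied to the eigenvalues of $Q$ lying in $[0,1]$, the trace bound).

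The main obstacle I anticipate is the passage from the determinant/product statement to the trace statement, i.e.\ making rigorous the step that converts ``$A_{\beta R}$-volume of $K$ is not much larger than its $A_R$-volume'' into ``$\sum_i A_R(u_i,u_i)$ is large''. The cleanest route is to diagonalize $A_R$ against $A_{\beta R}$, obtaining eigenvalues $0\le\lambda_1\le\cdots\le\lambda_k\le 1$ with $\sum_i A_R(u_i,u_i)=\sum_i\lambda_i$ and $\prod_i\lambda_i=\det Q$; then one needs the telescoping argument to deliver a lower bound on $\det Q$ (equivalently on the ratio of the two Gram determinants across scales) rather than merely a bound on a single $F$. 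Care is also needed in choosing the scale uniformly over all of $K$ at once rather than function-by-function, which is precisely why the $\delta$ and the freedom to take $R$ arbitrarily large (beyond $R_0(K)$, so that $A_R$ is already an inner product on $K$ by Lemma \ref{innerprod}) are built into the statement.
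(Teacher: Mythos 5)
Your proposal is correct and is essentially the argument the paper relies on: the paper states Lemma \ref{PNGL1} without proof, citing \cite{L1,L2,CM4,Hu2}, and the proof there is exactly your scheme --- telescope the Gram determinant of $A_{\beta^m R_1}$ on $K$ over geometric scales, bound it above by $C R^{k(2d+2)}$ via the growth condition and \eqref{RVCA1}, use positivity from Lemma \ref{innerprod} at the base scale, and convert the resulting determinant lower bound $\det Q\geq\beta^{-k(2d+2+\delta)}$ at some scale into the trace bound by the arithmetic--geometric mean inequality. Your identification of the determinant-to-trace passage and of the basis-independence of $\operatorname{tr}Q$ as the crux is exactly right, so nothing further is needed.
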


The following lemma follows from the mean value inequality (\ref{MVIOX}) for the extended functions.
\begin{lemma}\label{PNGL2} Let $G$ be a semiplanar graph with $\Sec G\geq0,$ $K$ be a $k$-dimensional subspace of $E(H^d(G)).$ Then there exists a constant
$C(D)$ such that for any fixed $0<\epsilon<\frac{1}{2}$, any basis of $K,$ $\{u_i\}_{i=1}^k,$ $R\geq R_2(D,\epsilon),$ where $\epsilon R_2\geq R_1(D)$ ($R_1(D)$ is the constant in Theorem \ref{MVIIL}) we have
$$\sum_{i=1}^kA_R(u_i,u_i)\leq C(D)\epsilon^{-1}\sup_{u\in <A,U>}\int_{B_{(1+\epsilon)R}(p)}u^2,$$ where $<A,U>:=\{w=\sum_{i=1}^ka_iu_i:\sum_{i=1}^ka_i^2=1\}.$
\end{lemma}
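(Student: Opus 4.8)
The plan is to reduce the claim to a bound on the trace $\sum_{i=1}^k A_R(u_i,u_i)=\int_{B_R(p)}\sum_i u_i^2$, and to apply the mean value inequality of Theorem \ref{MVIIL} pointwise to a cleverly chosen element of $K$. First I would record the elementary fact that for every $x\in X$,
$$\sum_{i=1}^k u_i^2(x)=\sup_{\sum_i a_i^2=1}\Big(\sum_i a_i u_i(x)\Big)^2=w_x^2(x),$$
where $w_x:=\sum_i a_i(x)u_i\in\langle A,U\rangle$ is the combination whose coefficient vector is parallel to $(u_1(x),\dots,u_k(x))$; this is just Cauchy--Schwarz in coefficient space and needs no orthonormality of the basis. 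Since $E$ is linear and $u_i=Ef_i$ for harmonic $f_i$ on $G$, each $w_x=E\big(\sum_i a_i(x)f_i\big)$ is the extension of a harmonic function, so Theorem \ref{MVIIL} applies to $w_x$.

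The key idea, and the reason the exponent is $\epsilon^{-1}$ rather than the naive $\epsilon^{-2}$, is to apply the mean value inequality not at the fixed scale $\epsilon R$ but at the adaptive radius $\rho_x:=(1+\epsilon)R-d(x,p)$, the largest radius for which $B_{\rho_x}(x)\subset B_{(1+\epsilon)R}(p)$. For $x\in B_R(p)$ we have $\rho_x\ge\epsilon R\ge R_1(D)$ (using $\epsilon R_2\ge R_1(D)$ and $R\ge R_2$), so Theorem \ref{MVIIL} gives
$$\sum_i u_i^2(x)=w_x^2(x)\le\frac{C_2(D)}{|B_{\rho_x}(x)|}\int_{B_{\rho_x}(x)}w_x^2\le\frac{C_2(D)}{|B_{\rho_x}(x)|}\sup_{u\in\langle A,U\rangle}\int_{B_{(1+\epsilon)R}(p)}u^2,$$
because $w_x\in\langle A,U\rangle$ and $B_{\rho_x}(x)\subset B_{(1+\epsilon)R}(p)$. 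Writing $S$ for the supremum on the right and integrating over $B_R(p)$ reduces the whole lemma to the purely geometric estimate $\int_{B_R(p)}|B_{\rho_x}(x)|^{-1}\,dx\le C\epsilon^{-1}$.

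To prove this I would decompose $B_R(p)$ into the dyadic shells $S_j:=\{x\in B_R(p):2^j\epsilon R\le\rho_x<2^{j+1}\epsilon R\}$, $j\ge 0$, and use the relative volume comparison \eqref{RVCA1} in two opposite directions. From $B_R(p)\subset B_{2R}(x)$ one gets the lower bound $|B_{\rho_x}(x)|\ge|B_{2^j\epsilon R}(x)|\ge \tfrac14\,4^{j}\epsilon^2|B_R(p)|$; meanwhile, since $x\in S_j$ forces $d(x,p)>R-2^{j+1}\epsilon R$, the shell obeys $|S_j|\le|B_R(p)\setminus B_{R-2^{j+1}\epsilon R}(p)|\le 2^{j+2}\epsilon\,|B_R(p)|$, the last step being the annulus bound $|B_R|-|B_r|\le 2\frac{R-r}{R}|B_R|$ that is itself a consequence of $|B_r|\ge(r/R)^2|B_R|$. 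Multiplying the two gives $\int_{S_j}|B_{\rho_x}(x)|^{-1}\,dx\le 16\cdot 2^{-j}\epsilon^{-1}$, and summing the geometric series over $j$ yields $C\epsilon^{-1}$. Combined with the previous display this produces $\sum_i A_R(u_i,u_i)\le C(D)\epsilon^{-1}S$, which is exactly the assertion.

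I expect the only genuine subtlety to be the choice of the adaptive radius $\rho_x$ together with the two-sided use of volume comparison in the shell estimate. A fixed averaging radius $\epsilon R$ would force $\int_{B_R}|B_{\epsilon R}(x)|^{-1}\,dx\sim\epsilon^{-2}$ (already sharp in the flat model $\mathds{R}^2$), hence only a quadratic dimension bound. Letting the averaging radius grow like the distance to $\partial B_{(1+\epsilon)R}(p)$ trades the thin boundary layer, where $\rho_x$ is small, against the smallness of its volume, and this is what saves exactly one power of $\epsilon$ — precisely the gain needed, after optimizing $\epsilon\sim 1/d$ against Lemmas \ref{innerprod} and \ref{PNGL1}, to make $\dim H^d(G)$ linear in $d$.
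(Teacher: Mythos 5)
Your proposal is correct and is essentially the paper's own argument: the paper makes the identical reduction (it realizes $\sup_{\sum a_i^2=1}(\sum a_i u_i(x))^2$ via an orthogonal change of basis producing $v_1$ with $\sum_i u_i^2(x)=v_1^2(x)$, which is your Cauchy--Schwarz step in different clothing, likewise needing no orthonormality of $\{u_i\}$), applies Theorem \ref{MVIIL} at the same adaptive radius $(1+\epsilon)R-r(x)$, and then reduces to the same geometric estimate. The only deviation is in that last step: the paper bounds $|B_{\rho_x}(x)|^{-1}\leq \frac{4}{V_p(R)}(1+\epsilon-R^{-1}r(x))^{-2}$ using \eqref{RVCA1} with $B_R(p)\subset B_{2R}(x)$, and then evaluates $\int_{B_R(p)}(1+\epsilon-R^{-1}r(x))^{-2}dx\leq 2V_p(R)\epsilon^{-1}$ by the coarea identity $\int_{B_R(p)}f(r(x))dx=\int_0^R f(t)A_p(t)\,dt$ with $A_p(t)=V_p'(t)$ a.e.\ and an integration by parts against the comparison bound $V_p(t)\geq (t/R)^2V_p(R)$; your dyadic shells $S_j$ with the annulus bound $|B_R|-|B_r|\leq 2\frac{R-r}{R}|B_R|$ prove the same estimate (with a slightly worse absolute constant) while avoiding the coarea formula and the a.e.\ differentiability of $t\mapsto V_p(t)$, so it is a marginally more elementary and robust execution of the identical two-sided use of volume comparison. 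Your closing remarks --- that $E$-linearity puts $w_x$ in the scope of Theorem \ref{MVIIL}, and that a fixed averaging radius $\epsilon R$ would only yield $\epsilon^{-2}$ and hence a quadratic bound --- correctly identify why the adaptive radius is the crux.
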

\begin{proof} For any $x\in B_R(p),$ we set $K_x=\{u\in K: u(x)=0\}.$ It
  is easy to see that $\dim K/K_x\leq1.$ Hence there exists an orthonormal linear transformation $\phi:K\rightarrow K,$ which maps $\{u_i\}_{i=1}^k$ to $\{v_i\}_{i=1}^k$ such that $v_i\in K_x,$ for $i\geq2.$ For any $x\in B_R(p),$ since $\epsilon R\geq \epsilon R_2\geq R_1,$ then $(1+\epsilon)R-r(x)\geq R_1$ for $r(x)=d(p,x).$ Hence the mean value inequality (\ref{MVIOX}) implies that for any $x\in B_R(p)$
\begin{eqnarray}\label{SOM1}
\sum_{i=1}^ku_i^2(x)&=&\sum_{i=1}^kv_i^2(x)=v_1^2(x)\nonumber\\
&\leq&C(D)|B_{(1+\epsilon)R-r(x)}(x)|^{-1}\int_{B_{(1+\epsilon)R-r(x)}(x)}v_1^2\nonumber\\
&\leq&C(D)|B_{(1+\epsilon)R-r(x)}(x)|^{-1}\sup_{u\in <A,U>}\int_{B_{(1+\epsilon)R}(p)}u^2.
\end{eqnarray}
For simplicity, denote $V_p(t)=|B_t(p)|$ and $A_p(t)=|\partial B_t(p)|.$

By the relative volume comparison (\ref{RVCA1}), we have
$$V_x((1+\epsilon)R-r(x))\geq\left(\frac{(1+\epsilon)R-r(x)}{2R}\right)^2V_x(2R)\geq\left(\frac{(1+\epsilon)R-r(x)}{2R}\right)^2V_p(R).$$
Hence, substituting it into (\ref{SOM1}) and integrating over
$B_R(p)$, we have
\begin{equation}\label{plg22}
\Sigma_{i=1}^k\int_{B_R(p)}u_i^2\leq \frac{C(D)}{V_p(R)}\sup_{u\in
\langle
A,U\rangle}\int_{B_{(1+\epsilon)R}(p)}u^2\int_{B_R(p)}(1+\epsilon-R^{-1}r(x))^{-2}dx
\end{equation}
Define $f(t)=(1+\epsilon-R^{-1}t)^{-2},$ then
$f'(t)=\frac{2}{R}(1+\epsilon-R^{-1}t)^{-3}\geq0,$
$$\int_{B_R(p)}f(r(x))dx=\int_0^Rf(t)A_p(t)dt.$$ Since $A_p(t)=V^{'}_p(t)\ a.e.,$
we integrate by parts and obtain
$$\int_0^Rf(t)A_p(t)dt=f(t)V_p(t)\mid_0^R-\int_0^RV_p(t)f'(t)dt.$$
Noting that $f^{'}(t)\geq0$ and the relative volume comparison
(\ref{RVCA1}), we have
\begin{eqnarray*}
\int_0^RV_p(t)f'(t)dt&\geq&\frac{V_p(R)}{R^2}\int_0^Rt^2f'(t)dt\\
&=&\frac{V_p(R)}{R^2}\{t^2f(t)\mid_0^R-2 \int_0^R t f(t)dt\}
\end{eqnarray*}
Therefore
\begin{equation*}
\int_{B_R(p)}f(r(x))dx\leq\frac{2V_p(R)}{R^2}\int_0^Rtf(t)dt
\leq2V_p(R)\epsilon^{-1}.
\end{equation*}
Combining this with \eqref{plg22}, we prove the
lemma.

\end{proof}

\begin{proof}[Proof of Theorem \ref{MTAL}] For any $k-$dimensional subspace $K\subset E(H^d(G)),$ we set $\beta=1+\epsilon,$ for fixed small $\epsilon$. By Lemma \ref{PNGL1}, there exists infinitely many $R>R_0(K)$ such that for any orthonormal basis $\{u_i\}_{i=1}^k$ of $K$ with respect to $A_{(1+\epsilon) R},$ we have
$$\sum_{i=1}^k A_R(u_i,u_i)\geq k(1+\epsilon)^{-(2d+2+\delta)}.$$
Lemma \ref{PNGL2} implies that $$\sum_{i=1}^k A_R(u_i,u_i)\leq C(D)\epsilon^{-1}.$$ Setting $\epsilon=\frac{1}{2d},$ and letting $\delta\rightarrow 0,$ we obtain
\begin{equation}\label{PF11}k\leq C(D)\left(\frac{1}{2d}\right)^{-1}\left(1+\frac{1}{2d}\right)^{2d+2+\delta}\leq C(D)d.\end{equation}

By (\ref{PF11}) and Theorem 1.4 in \cite{HJL} that $\dim H^d(G)=1$ for any $\Sec G\geq0,$ $D\geq 43$ and $d>0,$ we obtain
$$\dim H^d(G)\leq Cd.$$
\end{proof}

At the end, we use the Harnack inequality (\ref{HNKIG1}) in Lemma \ref{HNKIG} to prove Nayar's theorem \cite{N}. We denote by $H^d_{+}(G):=\{u:G\rightarrow\mathds{R}\mid Lu=0, u(x)\geq -C(d^G(p,x)+1)^d\}$ the set of one-side bounded polynomial growth harmonic functions with growth rate less than or equal to $d.$ This is not a linear space, but the linear span of $H^d_{+}(G),$ denoted by $\Span H^d_{+}(G),$ trivially contains $H^d(G).$ The following corollary implies that they are equal.
\begin{corollary} Let $G$ be a semiplanar graph with $\Sec G\geq0.$ Then $$\Span H^d_{+}(G)=H^d(G)$$ which implies that $$\dim\Span H^d_{+}(G)\leq Cd,$$ for $d\geq1.$
\end{corollary}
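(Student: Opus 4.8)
The trivial inclusion $H^d(G)\subseteq \Span H^d_+(G)$ is already noted in the excerpt, so the entire content of the corollary is to show that every one-sided bounded polynomial growth harmonic function is in fact two-sided bounded, i.e. $H^d_+(G)\subseteq H^d(G)$. Once this is established, $H^d(G)$ being a linear space forces $\Span H^d_+(G)\subseteq \Span H^d(G)=H^d(G)$, hence equality; the stated dimension bound $\dim\Span H^d_+(G)\leq Cd$ then follows immediately from Theorem \ref{MTAL}. Thus the plan is to fix $u\in H^d_+(G)$, say $u(x)\geq -C_0(d^G(p,x)+1)^d$, and to upgrade this to an upper bound $u(x)\leq C'(d^G(p,x)+1)^d$ of the same polynomial order.

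The key idea is to convert the one-sided bound into a nonnegative harmonic function to which the Harnack inequality (Lemma \ref{HNKIG}) applies. Fix $R\geq 1$ and set $\tilde u := u + C_0(C_1R+1)^d$, where $C_1=C_1(D)$ is the radius constant from Lemma \ref{HNKIG}. On $B_{C_1R}^G(p)$ the hypothesis gives $u(x)\geq -C_0(d^G(p,x)+1)^d\geq -C_0(C_1R+1)^d$, so $\tilde u\geq 0$ there, and $\tilde u$ is harmonic since it differs from $u$ by a constant. After adding a small $\epsilon>0$ and passing to the limit $\epsilon\to 0$ (to meet the strict positivity hypothesis of Lemma \ref{HNKIG}), the Harnack inequality yields $\max_{B_R^G(p)}\tilde u\leq C_2\min_{B_R^G(p)}\tilde u\leq C_2\,\tilde u(p)$, the last step using $p\in B_R^G(p)$.

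Translating back to $u$ and using $\tilde u(p)=u(p)+C_0(C_1R+1)^d$, this reads
\begin{equation*}
\max_{B_R^G(p)}u\leq C_2\,u(p)+(C_2-1)C_0(C_1R+1)^d\leq C'(R+1)^d,
\end{equation*}
where $C'$ absorbs the fixed number $u(p)$ and the factor $C_1^d$. Since any vertex $x$ lies in $B_R^G(p)$ for $R=\max\{d^G(p,x),1\}$, we obtain $u(x)\leq C'(d^G(p,x)+1)^d$; combined with the assumed lower bound this gives $|u(x)|\leq C''(d^G(p,x)+1)^d$, so $u\in H^d(G)$, which is exactly what was needed.

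The one genuinely delicate point is the bookkeeping of radii in the Harnack inequality: Lemma \ref{HNKIG} controls $u$ on $B_R^G(p)$ only from nonnegativity on the \emph{larger} ball $B_{C_1R}^G(p)$, which is precisely why the additive constant must be taken as $C_0(C_1R+1)^d$ rather than $C_0(R+1)^d$. This is harmless, costing only a fixed factor $C_1^d$ in the final constant and preserving the polynomial degree $d$. A secondary technical point, already addressed above, is that Lemma \ref{HNKIG} is phrased for strictly positive harmonic functions, so one passes through $\tilde u+\epsilon$ and lets $\epsilon\to0$ (equivalently, one uses that a nonnegative harmonic function satisfies the same inequality by continuity).
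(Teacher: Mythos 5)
Your proof is correct and follows essentially the same route as the paper: shift $u$ by an additive constant of order $R^d$ to obtain a nonnegative harmonic function on the large ball, apply the Harnack inequality of Lemma \ref{HNKIG}, and evaluate at the base point to bound the maximum on the small ball by $C R^d$. The only (cosmetic) difference is that you center the Harnack balls at $p$, whereas the paper centers them at the point $x$ and uses the inclusion $B_{C_1R}^G(x)\subset B_{(C_1+1)R}^G(p)$; your bookkeeping of the radius $C_1R$ and the $\epsilon$-regularization for strict positivity are both sound.
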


\begin{proof} It suffices to show $H^d_{+}(G)\subset H^d(G).$ For any $f\in H^d_{+}(G),$ there exists a constant $C$ such that
$f(x)\geq -C(d(p,x)+1)^d.$ We need to prove that $f(x)\leq C(d(p,x)+1)^d,$ for some $C.$ For simplicity, we assume $f(p)=0.$ Let $C_1(D)$ be the constant for the Harnack inequality in the Lemma \ref{HNKIG}. Then for any $x\in B_R^G(p),$ $R>0,$ it is easy to see that $B_{C_1R}^G(x)\subset B_{(C_1+1)R}^G(p).$ Moreover $$f(y)\geq -C(d(p,y)+1)^d\geq -C((C_1+1)R+1)^d\geq -CR^d,$$ for $y\in B_{(C_1+1)R}^G(p),$ $R\geq R_1(D).$ That is $f(y)+CR^d\geq0$ on $B_{C_1R}^G(x).$ The Harnack inequality (\ref{HNKIG1}) implies that
$$f(x)+CR^d\leq C(f(p)+CR^d)=CR^d.$$ Then we have
$$f(x)\leq CR^d,$$ for $x\in B_R^G(p),$ $R\geq R_1(D).$ Hence there exists a constant $C$ such that $f(x)\leq C(d(p,x)+1)^d.$
\end{proof}

\end{document}